\documentclass[11pt]{article}
\usepackage[a4paper,margin=1in]{geometry}
\usepackage[T1]{fontenc}
\usepackage[utf8]{inputenc}
\usepackage{lmodern}
\usepackage{microtype}
\usepackage{amsmath,amssymb,amsthm,mathtools}
\usepackage{enumitem}
\usepackage{hyperref}
\usepackage[nameinlink,noabbrev]{cleveref}
\usepackage[
  backend=biber,
  url=false,
  giveninits=true
]{biblatex}

\DeclareFieldFormat[article]{title}{#1}
\DeclareFieldFormat[inproceedings]{title}{#1}
\DeclareFieldFormat[incollection]{title}{#1}
\DeclareFieldFormat[inbook]{title}{#1}

\renewbibmacro{in:}{}

\DeclareNameAlias{author}{family-given}
\DeclareNameAlias{editor}{family-given}
\DeclareNameAlias{translator}{family-given}
\DeclareNameAlias{sortname}{family-given}
\DeclareNameAlias{default}{family-given}

\AtEveryBibitem{%
  \clearfield{issn}%
  \clearfield{url}%
  \clearfield{urlyear}%
  \clearfield{urlmonth}%
  \clearfield{urlday}%
}

\hypersetup{colorlinks=true,linkcolor=blue,citecolor=blue,urlcolor=blue}

\newtheorem{theorem}{Theorem}[section]
\newtheorem{proposition}[theorem]{Proposition}
\newtheorem{lemma}[theorem]{Lemma}

\theoremstyle{definition}
\newtheorem{definition}[theorem]{Definition}
\newtheorem{remark}[theorem]{Remark}
\newtheorem{notation}[theorem]{Notation}

\newcommand{\R}{\mathbb{R}}

\newcommand{\absconv}{\operatorname{absconv}}
\newcommand{\VCdim}{\operatorname{VCdim}}

\newcommand{\bx}{b_X}
\newcommand{\eps}{\varepsilon}

\newcommand{\doi}[1]{\href{https://doi.org/#1}{doi:\nolinkurl{#1}}}
\newcommand{\norm}[1]{\left\lVert #1 \right\rVert}
\newcommand{\ip}[2]{\left\langle #1,#2 \right\rangle}

\title{Failure of Convex-Hull Bounds under Log-Convex Tails}

\author{
Xuanang Hu\thanks{Shandong University, Jinan 250100, China. 
Email: xuananghu7@gmail.com}
\and
Hanchao Wang\thanks{Shandong University, Jinan 250100, China. 
Email: wanghanchao@sdu.edu.cn}
}

\date{}

\begin{document}
\maketitle

\begin{abstract}
Fix $0<r<1$, and let $X_1,X_2,\dots$ be independent symmetric Weibull$(r)$ random variables, that is,
\[
\textsf{P}(|X_i|>t)=e^{-t^r},\qquad t\ge 0.
\]
We prove that there is no constant $C_r$, depending only on $r$, with the following universal property: for every finite set $T\subset \R^N$ there exists a sequence $(y_k)_{k\ge 1}\subset \R^N$ such that
\[
T-T\subset conv\{y_k:k\ge 1\},
\qquad
\|X_{y_k}\|_{L_{\log(k+2)}}\le C_r\,\bx(T)
\quad (k\ge 1),
\]
where $X_t=\sum_i t_i X_i$ and $\bx(T)=\textsf{E}\sup_{t\in T}X_t$. This gives a negative answer to a question of Lata{\l}a concerning the validity
of convex-hull bounds for canonical Weibull processes. In fact, the failure
persists even when the auxiliary vectors appearing in the convex hull are
allowed to be arbitrary. \\

\noindent\textbf{2020 Mathematics Subject Classification:}  52A40, 60E15.\\

\noindent\textbf{Keywords:} Convex hull, log-convex tails, canonical Weibull processes, generic chaining.

\end{abstract}

\section{Introduction and statement of the main results}

Let \(X=(X_1,\ldots,X_N)\) be a random vector in \(\mathbb R^N\) whose
coordinates are independent and centered. For \(t=(t_1,\ldots,t_N)\in
\mathbb R^N\), set
\[
        X_t:=\langle t,X\rangle=\sum_{i=1}^N t_iX_i .
\]
Given an index set \(T\subset \mathbb R^N\), we consider the associated
canonical process \((X_t)_{t\in T}\) and denote its expected supremum by
\[
        b_X(T):=\textsf E \sup_{t\in T} X_t ,
\]
whenever this quantity is finite. Estimating \(b_X(T)\) in terms of the
geometry of \(T\) and the distributional properties of \(X\) is a central
problem in the theory of canonical processes.

In the Gaussian case, namely when \(X=G\) has independent standard
Gaussian coordinates, Talagrand's majorizing measure theorem
\cite{Talagrand1996GenericChaining,Talagrand2021UpperLower} gives optimal two-sided bounds in terms of the generic chaining
functional. This result is one of
the cornerstones of the modern theory of Gaussian processes and generic
chaining.

Beyond the Gaussian setting, the theory is more delicate and depends
strongly on the tail behavior of the coordinates. Lata{\l}a
\cite{Latala1997Sudakov} studied the canonical processes generated by
independent random variables with log-concave tails, while Lata{\l}a and
Tkocz \cite{LatalaTkocz2015RegularMoments} developed a framework for
variables with regularly growing moments. The Bernoulli case exhibits a
different behavior, and the boundedness problem for Bernoulli processes was
solved by Bednorz and Lata{\l}a
\cite{BednorzLatala2014Bernoulli} via a decomposition theorem.

In addition to generic chaining, Lata{\l}a introduced a convex-hull
approach tailored to certain classes of heavier-tailed canonical processes;
see \cite{Latala2023}. For clarity, we first recall the chaining
formulation and then explain how it naturally leads to convex-hull bounds.

We shall use the following comparison notation throughout the paper. The
symbols \(C,c>0\) denote positive universal constants whose values may
change from line to line. For two nonnegative quantities \(A\) and \(B\),
we write \(A\lesssim B\) if \(A\le CB\), and \(A\simeq B\) if both
\(A\lesssim B\) and \(B\lesssim A\) hold. The subscripts indicate the
parameters on which the implicit constants are allowed to depend; for
example, \(A\lesssim_r B\) means that \(A\le C(r)B\) and
\(A\simeq_r B\) means that both \(A\lesssim_r B\) and
\(B\lesssim_r A\) hold. 

We shall also distinguish between probabilistic and deterministic
$p$-norms. If $Z$ is a real-valued random variable and $p\ge 1$, then
\[
        \norm{Z}_{L_p}:=(\textsf E |Z|^p)^{1/p}
\]
denotes the $L_p$-norm of $Z$ on the underlying probability space. Thus,
for $u\in\mathbb R^N$, the quantity $\norm{X_u}_{L_p}$ is the $p$-th
moment norm of the random linear form $X_u=\sum_i u_iX_i$. In contrast,
\[
        \norm{u}_p:=\Bigl(\sum_{i=1}^N |u_i|^p\Bigr)^{1/p},
        \qquad
        \norm{u}_\infty:=\max_{1\le i\le N}|u_i|,
\]
are deterministic $\ell_p$-norms of the coefficient vector $u$. In
particular, whenever the subscript $L_p$ is used, the norm is taken with
respect to probability; without the letter $L$, the norm is taken in the
coordinate space.

In the generic chaining framework, one considers the family of
pseudo-metrics
\[
        d_p(s,t):=\|X_s-X_t\|_{L_p}, \qquad p\ge 1 .
\]
An admissible sequence of partitions of \(T\) is a refining sequence
\((\mathcal A_n)_{n\ge0}\) such that \(\mathcal A_0=\{T\}\) and
\[
        |\mathcal A_n|\le 2^{2^n}, \qquad n\ge 1 .
\] One defines
\[
        \gamma_X(T):=
        \inf_{\mathcal A}
        \sup_{t\in T}\sum_{n\ge0}
        \Delta_{2^n,X}(A_n(t)),
\]
where
\[
        \Delta_{p,X}(A):=\sup_{s,t\in A}\|X_s-X_t\|_{L_p}.
\]
Here, \( A_n(t)\) denotes the element of \(\mathcal A_n\)
containing \(t\).  Under suitable regularity assumptions on the moments of
the coordinates, see Lata{\l}a and Tkocz
\cite{LatalaTkocz2015RegularMoments}, one has
\[
        b_X(T)\simeq \gamma_X(T).
\]

We now recall how a convex-hull bound arises from the chaining
construction.  Assume for simplicity that \(T\) is finite, and let
\((\mathcal A_n)\) be an admissible sequence of partitions which nearly
attains \(\gamma_X(T)\). For each  \(A\in \mathcal A_n\), choose a
point \(t_A\in A\). For \(t\in T\), set
\[
        \pi_n(t):=t_{A_n(t)} .
\]

Since \(T\) is finite, the partitions may be chosen so that they separate
all points at sufficiently large levels. Hence \(\pi_n(t)=t\) for all large
\(n\), and therefore
\[
        t-\pi_0(t)
        =
        \sum_{n\ge1}\bigl(\pi_n(t)-\pi_{n-1}(t)\bigr).
\]
Because the partitions are refining, both \(\pi_n(t)\) and
\(\pi_{n-1}(t)\) belong to \( A_{n-1}(t)\).

The increments in the last display are controlled by the diameters of
\( A_{n-1}(t)\):
\[
        \|X_{\pi_n(t)-\pi_{n-1}(t)}\|_{L_{2^n}}
        \le
        \Delta_{2^n,X}( A_{n-1}(t)).
\]
Moreover, the number of possible increments at level \(n\) is at most of
order \(2^{2^n}\).  After listing these increments with ranks
\(k\), the relation \(2^n\simeq \log k\) gives vectors
\((y_k)_{k\ge1}\) such that
\[
        T-T\subset \operatorname{conv}\{y_k:k\ge1\},
        \qquad
        \|X_{y_k}\|_{L_{\log(k+2)}}\lesssim \gamma_X(T).
\]
Consequently, for any class of canonical processes for which
\(\gamma_X(T)\lesssim b_X(T)\), the chaining estimate implies a
convex-hull bound. More precisely, one can find vectors
\((y_k)_{k\ge1}\subset\mathbb R^N\) such that
\[
        T-T\subset \operatorname{conv}\{\pm y_k:k\ge1\},
        \qquad
        \|X_{y_k}\|_{L_{\log(k+2)}}\lesssim b_X(T),
        \quad k\ge1 .
\]

In the final section of \cite{Latala2023}, Lata{\l}a discussed the canonical
Weibull processes and pointed out that Bogucki had obtained two-sided
bounds for \(b_X(T)\) using random permutations in \cite{Bogucki2015}. However, he noted that it was not known
whether the convex-hull method works in this setting. Thus, the validity of
a convex-hull bound for canonical Weibull processes was left as an open
problem.

Canonical Weibull processes constitute a natural class of canonical
processes generated by independent coordinates with log-convex tails. We
begin by recalling their definition.

\begin{definition}\label{def:weibull-process}
Fix \(0<r<1\).  A random variable \(\xi\) is called symmetric
Weibull\((r)\) if \(\xi\) is symmetric and
\[
        \textsf P(|\xi|>t)=\exp(-t^r),\qquad t\ge 0 .
\]
Given independent copies \(X_1,\ldots,X_N\) of \(\xi\), we consider the
canonical process
\[
        X_t=\sum_{i=1}^N t_iX_i,\qquad t\in\mathbb R^N .
\]
For a finite set \(T\subset\mathbb R^N\), we keep the notation
\[
        b_X(T)=\textsf{E}\sup_{t\in T}X_t .
\]
\end{definition}

From now on we focus on canonical Weibull processes with \(0<r<1\).
Their moments satisfy
\[
        \|X_1\|_{L_p}\simeq_r p^{1/r},\qquad p\ge2.
\]
Thus their moments grow faster than linearly in \(p\), and hence these
variables fall outside the regular-moment framework described above.

Bogucki \cite{Bogucki2015} proved sharp two-sided estimates for the
expected suprema of canonical Weibull processes by using non-increasing
rearrangements. In the discussion of the convex-hull approach,
Lata{\l}a \cite{Latala2023} pointed out that these estimates can also be
viewed through the use of random permutations, which may be eliminated when
\(T\) is permutationally invariant. However, such estimates do not settle
whether the convex-hull method itself is valid for canonical Weibull
processes.

This leads to the open problem raised in \cite{Latala2023}: does every
finite set \(T\subset\mathbb R^N\) satisfy a convex-hull bound of the form
\[
        T-T\subset \operatorname{conv}\{\pm y_k:k\ge1\},
        \qquad
        \|X_{y_k}\|_{L_{\log(k+2)}}\lesssim_r b_X(T),
        \quad k\ge1,
\]
for some auxiliary vectors \((y_k)_{k\ge1}\subset\mathbb R^N\)?
For regular canonical processes, this bound follows from the
\(\gamma_X\)-functional. For canonical Weibull processes with \(0<r<1\),
the validity of this convex-hull bound remained open.

Our main result gives a negative answer to this problem, even in a weak
form. We allow the vectors appearing in the convex hull to be arbitrary
auxiliary vectors: they need not belong to \(T-T\), nor even be parallel to
differences of points of \(T\). Thus, obstruction does not come from a
restriction on directions, but rather from the \(L_{\log k}\)-moment
control itself.

The counterexample constructed in this paper is given in the specific setting of canonical Weibull processes. This is sufficient to disprove any convex-hull principle intended to hold uniformly for canonical processes with log-convex tails, since the Weibull case considered here forms a natural subclass of that framework. Thus, the failure identified below is not caused by an artificial choice of directions or by a boundary case of the theory; it already occurs in one of the basic log-convex-tail examples.

We are now ready to state our main result. For a set \(A\subset\mathbb R^N\), define its absolutely convex hull by
\[
\operatorname{absconv}(A)
 :=
 \left\{
     \sum_{j=1}^m a_j a^{(j)}:
     m<\infty,\ a^{(j)}\in A,\ \sum_{j=1}^m |a_j|\le1
 \right\}.
\]
Equivalently,
\[
        \operatorname{absconv}(A)=\operatorname{conv}(A\cup(-A)).
\]
If \(A=\{y_k:k\ge1\}\), then for every \(\theta\in\mathbb R^N\),
\begin{equation}\label{eq:support-absconv}
        \sup_{z\in\operatorname{absconv}\{y_k:k\ge1\}}
        \langle \theta,z\rangle
        =
        \sup_{k\ge1}|\langle \theta,y_k\rangle|.
\end{equation}
Indeed, every element of the absolutely convex hull is a finite
\(\ell_1\)-combination of the vectors \(y_k\), with total mass at most
one.

\begin{definition}\label{def:free-covering}
For $k\ge 1$, let
\[
p_k:=\max\{2,\log(k+2)\}.
\]
A sequence $(y_k)_{k\ge 1}\subset\R^N$ is called a \emph{free rank-wise covering sequence} for a finite set $T\subset\R^N$ if
\[
T-T\subset \absconv\{y_k:k\ge 1\}
\]
and there exists a constant $C>0$ such that
\[
\|X_{y_k}\|_{L_{\log(k+2)}}\le C\,\bx(T),\qquad k\ge 1.
\]
The adjective \emph{free} means that no condition is imposed on the provenance of $y_k$; in particular, $y_k$ need not belong to $T-T$, and need not even be parallel to a difference of points in $T$.
\end{definition}

\begin{theorem}\label{thm:main-impossibility}
Fix $0<r<1$. There does not exist a constant $C_r<\infty$, depending only on $r$, such that for every dimension $N$ and every finite set $T\subset\R^N$ one can find a sequence $(y_k)_{k\ge 1}\subset\R^N$ satisfying
\begin{equation}\label{eq:universal-false-statement}
T-T\subset \absconv\{y_k:k\ge 1\},
\qquad
\|X_{y_k}\|_{L_{\log(k+2)}}\le C_r\,\bx(T)
\quad (k\ge 1).
\end{equation}
\end{theorem}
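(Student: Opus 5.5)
The plan is to find a necessary condition implied by \eqref{eq:universal-false-statement} that is insensitive to the choice of $(y_k)$, and then to exhibit a family of sets $T=T_N$ that violates it with a ratio tending to infinity. The tool is duality via \eqref{eq:support-absconv}. If $T-T\subset\absconv\{y_k\}$, then for every random vector $\theta$,
\[
\textsf E\sup_{s,t\in T}\langle\theta,s-t\rangle\le \textsf E\sup_{k\ge1}|\langle\theta,y_k\rangle| .
\]
The right-hand side can be bounded by a union bound, provided the moments of $\langle\theta,y_k\rangle$ at level $\log(k+2)$ are controlled by the moments of $X_{y_k}$ at the same level. Two remarks follow from this.

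First, the naive tester $\theta=X$ gives nothing. Since $\|X_y\|_{L_{up}}\lesssim_r u^{1/r}\|X_y\|_{L_p}$, Markov's inequality gives $\textsf P\bigl(|X_{y_k}|>Cu^{1/r}C_rb_X(T)\bigr)\le (k+2)^{-u}$. Hence $\textsf E\sup_k|X_{y_k}|\lesssim_r C_r b_X(T)$, which is consistent with $b_X(T-T)\le 2b_X(T)$. So any obstruction must come from a tester whose moment profile differs from the Weibull one.

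Second, the comparison I will use is the two-sided moment formula for Weibull sums with $r<1$ (Hitczenko--Montgomery-Smith--Oleszkiewicz):
\[
\|X_u\|_{L_p}\simeq_r p^{1/r}\|u\|_p+\sqrt p\,\|u\|_2 .
\]
Its lower bound shows that $\|X_{y_k}\|_{L_{\log(k+2)}}\le C_rb_X(T)$ forces both $\sqrt{\log k}\,\|y_k\|_2\lesssim_r b_X(T)$ and $(\log k)^{1/r}\|y_k\|_{\log k}\lesssim_r b_X(T)$. Using the Gaussian tester $\theta=G$ and the first inequality, a union bound yields
\[
b_G(T)\le \textsf E\sup_k|G_{y_k}|\lesssim \sup_k\sqrt{\log(k+2)}\,\|y_k\|_2\lesssim_r C_r\,b_X(T).
\]
The second inequality similarly yields a bound for testers with independent coordinates of tail profile $p^{1/r}$ measured in $\ell_p$, for instance $\theta$ with i.i.d.\ coordinates scaled suitably and mixed with Bernoulli signs. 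The point of combining the two is that a single vector $y_k$ must pay simultaneously for its Gaussian part and its $\ell_p$-part at the same level $p=\log k$. In contrast, $b_X(T)$ (by Bogucki's rearrangement formula) can exploit the two parts at different scales for different points of $T$.

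The next step is to build $T_N$ which is a union of two pieces placed on disjoint blocks of coordinates, so that $b_X(T_N)$ stays bounded. Roughly:
\begin{itemize}[nosep]
\item a ``Gaussian-type'' piece of spread-out vectors, whose cost in $b_X$ is carried by $\sqrt p\|\cdot\|_2$;
\item a ``peaky'' piece of sparse vectors, such as normalized indicators $m^{-1}\mathbf 1_I$ with $|I|=m$, whose cost in $b_X$ is carried by $p^{1/r}\|\cdot\|_p$ at a very different $p$.
\end{itemize}
The differences $s-t$ between the two pieces mix both features. The aim is to show that any $y_k$ used to represent such mixed differences must pay both costs at a common level $\log k$, while the number of required directions grows. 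Concretely, I will choose parameters $m,N$ and normalize so that $b_X(T_N)\simeq_r1$, using Bogucki's formula $b_X(T)\simeq_r\textsf E\sup_{t\in T}\sum_i t_i^*X_i^*$ (in the rearrangement form) to compute $b_X$ of each piece and of the union. I will then show, via a tester $\theta$ that mixes $G$ on one block with the Weibull-profile variable on the other, that $\textsf E\sup_{T_N-T_N}\langle\theta,\cdot\rangle\to\infty$, while every admissible $(y_k)$ gives $\textsf E\sup_k|\langle\theta,y_k\rangle|\lesssim_r C_r$.

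I expect the main obstacle to be this last step: choosing $T_N$ and the tester $\theta$ so that the lower bound for $\textsf E\sup_{T_N-T_N}\langle\theta,\cdot\rangle$ genuinely exceeds what the moment constraints on $(y_k)$ allow. The simple candidates (simplices, cubes, sparse-indicator sets alone) all satisfy the convex-hull bound, as the computation for $\{e_i\}$ with $y_k=e_i-e_j$ shows. So the gain must come from the interaction of the two regimes in the moment formula, and the delicate point is a precise two-sided evaluation of $b_X(T_N)$ through Bogucki's estimate. If the mixed tester turns out too weak, the fallback is a direct counting argument. Fix the level $p$, bound the number of $k$ with $\log k\le p$, and show, via a volumetric or Sudakov-type estimate in the $\ell_p$-quasi-geometry, that the set $\{y\in\R^N:\ (\log k)^{1/r}\|y\|_{\log k}+\sqrt{\log k}\,\|y\|_2\lesssim C_r\}$ with only $e^{p}$ members cannot absolutely-convexly cover the relevant slice of $T_N-T_N$.
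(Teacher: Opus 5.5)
There is a genuine gap. You never produce $T_N$, the tester $\theta$, or the lower bound for $\textsf E\sup_{T_N-T_N}\langle\theta,\cdot\rangle$. Worse, the mechanism you plan to exploit, an ``interaction'' between two pieces, does not exist, because convex-hull bounds are stable under finite unions. Suppose $T=T_1\cup T_2$ and $T_i-T_i\subset\absconv\{y^{(i)}_k\}$ with $\|X_{y^{(i)}_k}\|_{L_{\log(k+2)}}\le C\,\bx(T_i)\le C\,\bx(T)$. Fix $s_0\in T_1$, $t_0\in T_2$ and write $s-t=(s-s_0)+(s_0-t_0)+(t_0-t)$. Then the interleaved sequence $z_1=3(s_0-t_0)$, $z_{2k}=3y^{(1)}_k$, $z_{2k+1}=3y^{(2)}_k$ satisfies $T-T\subset\absconv\{z_m\}$. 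The index shift costs only a factor depending on $r$, since $\log(2k+3)\le 2\log(k+2)$ and $\|X_u\|_{L_{2p}}\lesssim_r\|X_u\|_{L_p}$ by Theorem~\ref{thm:HMO} and Lemma~\ref{lem:low-order-absorption}. The connecting vector is harmless because $\|X_{s_0-t_0}\|_{L_1}=2\,\textsf E\max(X_{s_0},X_{t_0})\le 2\,\bx(T)$. So mixed differences cost nothing, and a union of pieces that each satisfy the bound (as you say your simple pieces do) satisfies it too. Likewise, on disjoint blocks your mixed tester only yields the sum of the two single-block quantities plus one-point terms. Any counterexample must come from one piece that fails by itself, and in the paper it is not a product-type tester that detects the failure.

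The missing idea is the paper's choice of that piece and of the dual object. Take $T=\{0,\eps^1,\dots,\eps^M\}$ with $M=\lfloor e^Q\rfloor$ independent uniform sign vectors in $\{-1,1\}^d$, where $d=\lfloor Q^a\rfloor$ and $1<a<2/r-1$.
\begin{itemize}
\item Upper bound: conditioning on $X$ and taking a subgaussian maximum over $M$ signs gives $\bx(T)\lesssim_r\sqrt{dQ}$. It is the sparsity ($e^Q$ points with $Q\ll d$) that makes $\bx$ this small.
\item Obstruction: test each point against itself. By \eqref{eq:support-absconv}, $\eps^j\in\absconv\{y_k\}$ forces $\sup_k|\langle\eps^j,y_k\rangle|\ge d$.
\item High levels are useless: if $\rho_{p_k}(y_k)\le A\sqrt{dQ}$, every $y_k$ with $p_k>(4A\sqrt{dQ})^r=O(Q^{r(a+1)/2})=o(Q)$ has $\|y_k\|_\infty<1/4$. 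This is where your intuition that a single vector pays both costs at the same level is actually realized.
\item Low levels are few and weak: the remaining $e^{o(Q)}$ vectors have $\|y_k\|_2\le A\sqrt{dQ/p_k}$. By Hoeffding each catches at most a $2e^{-dp_k/(8A^2Q)}$ fraction of the cube, so $\mu(U(y_\bullet))\le e^{-c_{A,r}d/Q}$.
\item Uniformity: since $(y_k)$ may be chosen after seeing $T$, the paper uses a VC argument. $U(y_\bullet)$ is a union of $e^{o(Q)}$ halfspaces, while the sample has $e^Q$ points.
\end{itemize}
Your fallback (level-by-level counting with a volumetric or Sudakov estimate) lacks both ingredients. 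It has no reduction of covering to membership in $U(y_\bullet)$, which matters because absolutely convex hulls of few large vectors can have large volume and the cover mixes all levels. It also has no mechanism for handling sequences that depend on $T$.
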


\begin{remark}
The original problem raised by Lata{\l}a in \cite{Latala2023}  is stated in terms of the convex hull.
In this paper we work instead with the absolutely convex hull.
This change is only for convenience.

Indeed, in the present setting, the two formulations are
equivalent up to a constant depending only on \(r\).
Passing from the convex hull to the absolutely convex hull is immediate.
Conversely, it is enough to replace each vector \(y_k\) by the pair
\(y_k\) and \(-y_k\), and then arrange the resulting vectors as a sequence.
In this way, a statement formulated with the absolutely convex hull is reduced
to one formulated with the convex hull, with only a change in the constant.
A proof will be given after Lemma~\ref{lem:low-order-absorption}.
\end{remark}

We prove Theorem \ref{thm:main-impossibility} through the following quantitative statement.

\begin{notation}\label{not:gauge}
For $p\ge 2$ and $u\in\R^N$, set
\begin{equation}\label{eq:gauge-def}
\rho_p(u):=\sqrt{p}\,\norm{u}_2 + p^{1/r}\norm{u}_\infty.
\end{equation}
\end{notation}

\begin{theorem}\label{thm:quantitative-counterexample}
Fix $0<r<1$ and choose a parameter $a$ with
\[
1<a<\frac{2}{r}-1.
\]
Then for every fixed constant $A>0$ and all sufficiently large $Q$ there exist
\[
d=\lfloor Q^a\rfloor,
\qquad
M=\lfloor e^Q\rfloor,
\qquad
T=\{0,\eps^1,\dots,\eps^M\}\subset \{0\}\cup\{-1,1\}^d,
\]
such that
\begin{equation}\label{eq:bX-upper-main}
\bx(T)\le C_r\sqrt{dQ},
\end{equation}
but there is no sequence $(y_k)_{k\ge 1}\subset\R^d$ that simultaneously satisfies 
\begin{equation}\label{eq:T-covered}
T\subset \absconv\{y_k:k\ge 1\}
\end{equation}
and
\begin{equation}\label{eq:gauge-controlled}
\rho_{p_k}(y_k)\le A\sqrt{dQ},
\qquad p_k=\max\{2,\log(k+2)\},
\qquad k\ge 1.
\end{equation}
Consequently, no sequence satisfying \eqref{eq:gauge-controlled} can cover $T-T$ either.
\end{theorem}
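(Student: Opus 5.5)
The plan is to take $\eps^1,\dots,\eps^M$ to be independent uniform random sign vectors in $\{-1,1\}^d$, and to show that with positive probability (in fact with probability tending to one) both \eqref{eq:bX-upper-main} and the non-covering property hold. Throughout I use the standard moment estimate for sums of independent symmetric Weibull$(r)$ variables with $0<r<1$ (Gluskin--Kwapie\'n/Hitczenko type): $\|X_u\|_{L_p}\simeq_r \rho_p(u)$ for $p\ge2$. This is the reason for Notation~\ref{not:gauge}. In particular, \eqref{eq:gauge-controlled} is just the rank-wise moment condition with $C_r\bx(T)$ replaced by $A\sqrt{dQ}$.

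\emph{Upper bound \eqref{eq:bX-upper-main}.} The naive approach is a union bound over the $M=e^Q$ points with moments of order $p=Q$. It fails, because the term $Q^{1/r}$ exceeds $\sqrt{dQ}=Q^{(a+1)/2}$ when $a<2/r-1$. Instead I would truncate.

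1. Fix $s:=(C\log d)^{1/r}\simeq_r(\log Q)^{1/r}$ and split $X_i=Y_i+Z_i$ with $Y_i=X_i\mathbf 1_{\{|X_i|\le s\}}$.
2. For the bounded symmetric part, $\|\sum_i\eps_iY_i\|_{L_p}\lesssim\sqrt{pd}+ps$ holds for every sign vector $\eps$. Taking $p=Q$ and a union bound over the $M$ points gives $\textsf E\max_j\langle\eps^j,Y\rangle\lesssim \sqrt{dQ}+Qs\lesssim_r\sqrt{dQ}$, where the last step uses $a>1$.
3. For the tail part, $\sup_j\langle\eps^j,Z\rangle\le\sum_i|Z_i|$, and $\textsf E\sum_i|Z_i|\lesssim_r d\,s\,e^{-s^r}=o(1)$ by the choice of $s$.

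This step holds for any choice of signs.

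\emph{Lower bound, reduction.} Suppose $(y_k)$ satisfies \eqref{eq:T-covered} and \eqref{eq:gauge-controlled}. Testing \eqref{eq:support-absconv} with $\theta=\eps^j$ gives, for each $j$, an index $k$ with $|\langle\eps^j,y_k\rangle|\ge d$. The key observation is that large ranks are useless. Indeed, \eqref{eq:gauge-controlled} gives
\[
\|y_k\|_\infty\le A\sqrt{dQ}\,p_k^{-1/r},
\qquad\text{hence}\qquad
|\langle\eps^j,y_k\rangle|\le d\|y_k\|_\infty<d
\]
as soon as $p_k>(A^2dQ)^{r/2}\simeq A^rQ^{\gamma}$, where $\gamma:=r(a+1)/2<1$ because $a<2/r-1$. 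So only the ranks $k\le K:=\exp(C_AQ^\gamma)=M^{o(1)}$ can be used. For these, \eqref{eq:gauge-controlled} gives $\|y_k\|_2\le A\sqrt{dQ/p_k}$. Grouping by $p=p_k$, there are at most $e^{p}$ vectors at level $p$.

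\emph{Lower bound, uniform counting.} Next I would show that, with high probability over the $\eps^j$, every $y\in\R^d$ with $\|y\|_2\le R$ satisfies
\begin{equation*}
\#\{j\le M:|\langle\eps^j,y\rangle|\ge d\}\le 4M\exp\!\Bigl(-\frac{d^2}{2R^2}\Bigr)+C\,dQ .
\end{equation*}
For a fixed $y$, Hoeffding bounds the probability that a uniform sign vector lands in the slab by $2e^{-d^2/2R^2}$. The slabs $\{|\langle\cdot,y\rangle|\ge d\}$ are unions of two halfspaces, so they form a class of VC dimension $O(d)$. A relative-deviation VC inequality then upgrades the fixed-$y$ bound to one uniform over $y$, at the cost of an additive error $O(d\log M)=O(dQ)$. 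This is the step I expect to be the main technical obstacle: one needs the multiplicative (relative) form of the uniform deviation bound, not the additive $\sqrt{d/M}$ form. I would try the classical Vapnik--Chervonenkis relative bound first, or a direct union bound over a net of $y$'s combined with a slightly enlarged threshold.

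\emph{Conclusion.} Apply the counting bound with $R=A\sqrt{dQ/p}$, so that $d^2/2R^2=pQ^{a-1}/(2A^2)$. Summing over the admissible ranks $k\le K$, the number of covered indices is at most
\[
\sum_{p\ge2}e^{p}\cdot 4M e^{-pQ^{a-1}/(2A^2)}+K\,C\,dQ .
\]
The first term is $o(M)$ because $Q^{a-1}\to\infty$ (this uses $a>1$). The second term is $o(M)$ because $K\,dQ=e^{O(Q^\gamma)}$ with $\gamma<1$. Hence for large $Q$ some $\eps^j$ is not covered, which contradicts \eqref{eq:T-covered}. Since $T\subset T-T$ (because $0\in T$), no sequence satisfying \eqref{eq:gauge-controlled} can cover $T-T$ either.
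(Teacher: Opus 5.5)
Your proposal is correct, and both halves take a different route from the paper.

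\textbf{Upper bound.} For the estimate $b_X(T)\le C_r\sqrt{dQ}$ the paper does not truncate. It conditions on $X$ and uses that $\langle\varepsilon^j,x\rangle$ is sub-Gaussian in the random signs, which gives $\textsf{E}_\varepsilon\max_j|\langle\varepsilon^j,x\rangle|\lesssim\sqrt{Q}\,\|x\|_2$. It then averages using $\textsf{E}\|X\|_2\lesssim_r\sqrt d$ and applies Markov. So the paper obtains the bound only for a set of sign configurations of probability at least $1/2$, and must intersect it with the non-covering event. Your truncation at level $(C\log d)^{1/r}$ spends the hypothesis $a>1$ to absorb the term $Q(\log Q)^{1/r}$. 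In return it gives the bound for every family of at most $e^Q$ sign vectors, so randomness is needed only for the lower bound.

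\textbf{Non-covering.} The paper keeps the union over ranks inside the measure. For a fixed admissible sequence it bounds $\mu$ of the whole exposed set $\{\sigma:\sup_k|\langle y_k,\sigma\rangle|\ge d/2\}$. It notes that this set is a union of at most $2L$ halfspaces with $L\approx e^{(4W)^r}=e^{o(Q)}$. It then needs only a crude level-$1/2$ $\epsilon$-net estimate, proved from scratch by double sampling, for this large class. That class has VC dimension $O(dL\log(dL))=e^{o(Q)}$, still negligible against $M\approx e^Q$.

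You instead work with the small class of single slabs and move the union over ranks outside. This is exactly why you need the multiplicative deviation bound. The step you flag does go through. The classical relative-deviation inequality $\textsf{P}\{\exists C:\hat\mu(C)-\mu(C)>t\sqrt{\hat\mu(C)}\}\le 4\Pi_{\mathcal C}(2M)e^{-Mt^2/4}$, with $\hat\mu$ the empirical measure, yields $M\hat\mu(C)\le 2M\mu(C)+Cv\log M$ uniformly over the class. Even the crude bound $v=O(d\log d)$ would suffice here.

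\textbf{Trade-off.} Your route gives a stronger conclusion: any admissible sequence captures only an $o(1)$ fraction of the points. The price is a sharper imported VC tool. The paper's route is self-contained and uses only the weakest VC statement. Both arguments rest on the same two facts: $d/Q\to\infty$, which comes from $a>1$, and the number of usable ranks being only $e^{O(Q^{r(a+1)/2})}=e^{o(Q)}$, which comes from $a<2/r-1$.
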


\begin{remark}
Because $0\in T$, the inclusion $T-T\subset \absconv\{y_k\}$ implies $T\subset \absconv\{y_k\}$.
\end{remark}

We now explain how the set in Theorem \ref{thm:quantitative-counterexample} is constructed and how the proof is organized. The set \(T\) in
Theorem \ref{thm:quantitative-counterexample} is obtained by the probabilistic method.
For a large parameter \(Q\), we set
\[
d=\lfloor Q^a\rfloor,\qquad M=\lfloor e^Q\rfloor,
\]
and let
\[
\varepsilon^1,\ldots,\varepsilon^M
\]
be independent random vectors, each distributed uniformly on \(\{-1,1\}^d\). Equivalently,
the coordinates of each \(\varepsilon^j\) are independent symmetric signs. We consider the
random set
\[
T_\varepsilon:=\{0,\varepsilon^1,\ldots,\varepsilon^M\}.
\]

The proof has two parts. First, we show that \(T_\varepsilon\) has expected supremum of order
at most \(\sqrt{dQ}\):
\[
b_X(T_\varepsilon)\lesssim_r \sqrt{dQ}.
\]
This estimate holds with positive probability. Second, we show that, with probability tending
to one as \(Q\to\infty\), there is no sequence \((y_k)_{k\ge1}\subset\mathbb R^d\) such that
\[
T_\varepsilon\subset \operatorname{absconv}\{y_k:k\ge1\}
\]
and
\[
\rho_{p_k}(y_k)\le A\sqrt{dQ},\qquad
p_k=\max\{2,\log(k+2)\},\qquad k\ge1.
\]
These two probabilistic statements imply that, for all sufficiently large \(Q\), there is a
deterministic choice of the sign vectors for which both properties hold.

The obstruction to the absolutely convex representation is elementary but useful. Suppose that
\[
T_\varepsilon\subset \operatorname{absconv}\{y_k:k\ge1\}.
\]
Then, for every \(1\le j\le M\), applying \eqref{eq:support-absconv} with
\(\theta=\varepsilon^j\) gives
\[
d=\langle \varepsilon^j,\varepsilon^j\rangle
\le
\sup_{z\in \operatorname{absconv}\{y_k:k\ge1\}}
\langle \varepsilon^j,z\rangle
=
\sup_{k\ge1}|\langle \varepsilon^j,y_k\rangle|.
\]
Thus each point \(\varepsilon^j\) must satisfy
\[
\sup_{k\ge1}|\langle \varepsilon^j,y_k\rangle|\ge d.
\]
This motivates the definition
\[
U(y_\bullet):=
\left\{
\sigma\in\{-1,1\}^d:
\sup_{k\ge1}|\langle y_k,\sigma\rangle|\ge \frac d2
\right\}.
\]
Any sequence whose absolutely convex hull contains \(T_\varepsilon\) must therefore satisfy
\[
\varepsilon^j\in U(y_\bullet),\qquad 1\le j\le M.
\]

The main difficulty is that the sequence \((y_k)\) is not fixed in advance. It may depend on all
the random points \(\varepsilon^1,\ldots,\varepsilon^M\). We first prove that, for every fixed
sequence satisfying
\[
\rho_{p_k}(y_k)\le A\sqrt{dQ},\qquad k\ge1,
\]
the set \(U(y_\bullet)\) has small measure in the discrete cube. We then use a VC argument to
pass from fixed sequences to sequences chosen after the random points are given. The VC argument shows that, with probability tending to one, no sequence satisfying the above
bounds can have
\[
\varepsilon^j\in U(y_\bullet),\qquad 1\le j\le M.
\]

The rest of the paper follows this scheme. Section \(2\) collects the moment estimates and the
comparison between convex and absolutely convex hulls used throughout the proof. Section \(3\)
proves the expected supremum bound for the random set \(T_\varepsilon\). Section \(4\) shows
that, for each fixed sequence satisfying the bounds on \(\rho_{p_k}(y_k)\), the set
\(U(y_\bullet)\) has small measure in the discrete cube. Section \(5\) uses VC theory to show
that, with high probability, no sequence satisfying these bounds can have all the random points
\(\varepsilon^1,\ldots,\varepsilon^M\) contained in the corresponding set \(U(y_\bullet)\).
Section \(6\) chooses the deterministic set \(T\) and completes the proofs of Theorems
\ref{thm:main-impossibility} and \ref{thm:quantitative-counterexample}.

\section{Moment estimates and preliminary reductions}

The only non-elementary probabilistic input used in this paper is the
moment equivalence for sums of independent symmetric random variables with
log-convex tails. In the Weibull setting considered here, it takes the
following form.

\begin{theorem}\label{thm:HMO}
Let $X_1,\dots,X_N$ be independent symmetric Weibull$(r)$ random variables with $0<r\le 1$. Then there exist constants $0<c_r\le C_r<\infty$, depending only on $r$, such that for every $u\in\R^N$ and every $p\ge 2$,
\begin{equation}\label{eq:HMO}
c_r\,\rho_p(u)
\le
\norm{X_u}_{L_p}
\le
C_r\,\rho_p(u).
\end{equation}
\end{theorem}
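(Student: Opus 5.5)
The plan is to derive \eqref{eq:HMO} from the general moment formula of Hitczenko, Montgomery-Smith and Oleszkiewicz for sums of independent symmetric random variables with log-convex tails, i.e.\ with $t\mapsto -\log\textsf P(|\xi|>t)$ concave. For Weibull$(r)$ with $0<r\le1$ this function is $t^r$, which is concave, so their theorem applies with absolute constants. For $p\ge 2$ it gives
\[
\norm{X_u}_{L_p}\simeq \Bigl(\sum_i \textsf E|u_iX_i|^p\Bigr)^{1/p}+\sqrt p\Bigl(\sum_i \textsf E|u_iX_i|^2\Bigr)^{1/2}
= \norm{X_1}_{L_p}\norm{u}_p+\sqrt p\,\norm{X_1}_{L_2}\norm{u}_2 .
\]
It then remains to compute the one-dimensional moments and to compare the resulting expression with $\rho_p(u)$.

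For the one-dimensional moments, $\textsf E|X_1|^p=\int_0^\infty p t^{p-1}e^{-t^r}\,dt=\Gamma(1+p/r)$. By Stirling's formula, $\norm{X_1}_{L_p}\simeq_r p^{1/r}$ for $p\ge 2$, and $\norm{X_1}_{L_2}$ is a constant depending only on $r$. Hence
\[
\norm{X_u}_{L_p}\simeq_r p^{1/r}\norm{u}_p+\sqrt p\,\norm{u}_2 .
\]
This expression dominates $\rho_p(u)$, since $\norm{u}_p\ge\norm{u}_\infty$; this gives the lower bound in \eqref{eq:HMO}.

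For the upper bound I only need to absorb $p^{1/r}\norm{u}_p$ into $\rho_p(u)$. The interpolation inequality $\norm{u}_p^p\le \norm{u}_\infty^{p-2}\norm{u}_2^2$ gives
\[
p^{1/r}\norm{u}_p\le \bigl(p^{1/r}\norm{u}_\infty\bigr)^{1-2/p}\bigl(p^{1/r}\norm{u}_2\bigr)^{2/p}.
\]
Here $\bigl(p^{1/r}\bigr)^{2/p}=\exp\bigl(2\log p/(rp)\bigr)\le C_r$ uniformly in $p\ge2$, so the second factor is at most $C_r\norm{u}_2^{2/p}\le C_r(\sqrt p\,\norm{u}_2)^{2/p}$. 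Young's inequality with exponents $1/(1-2/p)$ and $p/2$ then yields
\[
p^{1/r}\norm{u}_p\le C_r\bigl(p^{1/r}\norm{u}_\infty+\sqrt p\,\norm{u}_2\bigr)=C_r\rho_p(u),
\]
which completes the upper bound.

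The only real obstacle is the general log-convex-tail moment formula itself, which I would cite rather than reprove. If a self-contained argument were wanted, the parts of the lower bound would be elementary. The bound $\norm{X_u}_{L_p}\ge \max_i|u_i|\norm{X_1}_{L_p}$ follows by conditioning and Jensen, using symmetry. The term $\sqrt p\norm{u}_2$ can be obtained by comparing with Rademacher sums, writing $X_i=\varepsilon_i|X_i|$, using $\textsf E|X_i|\gtrsim_r 1$ and contraction, and applying the known $\sqrt p\norm{u}_2$ lower bound for Rademacher sums when $\norm{u}_\infty$ is small relative to $\norm{u}_2/\sqrt p$. The upper bound would require the genuinely harder HMO argument, namely decomposing $X_i$ into truncated parts and using that, for concave $t^r$, the $p$-th moment is dominated by sums of independent symmetric exponential-type variables with appropriate scalings.
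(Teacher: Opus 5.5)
Your proposal is correct and follows essentially the same route as the paper. Both cite Hitczenko--Montgomery-Smith--Oleszkiewicz to obtain $\|X_u\|_{L_p}\simeq_r \sqrt{p}\,\|u\|_2+p^{1/r}\|u\|_p$, use $\|u\|_p\ge\|u\|_\infty$ for the lower bound, and use the interpolation $\|u\|_p^p\le\|u\|_\infty^{p-2}\|u\|_2^2$ for the upper bound. Your Young's-inequality step is just the homogeneous form of the paper's normalization $\sqrt{p}\,\|u\|_2+p^{1/r}\|u\|_\infty=1$, and you also spell out the Gamma/Stirling computation of $\|X_1\|_{L_p}\simeq_r p^{1/r}$, which the paper leaves implicit.
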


\begin{proof}
    By a theorem of Hitczenko, Montgomery-Smith, and Oleszkiewicz~\cite{HMO1997},
see also Lata{\l}a~\cite{latalamoment}, we have
    \[
    \norm{X_u}_{L_p}\simeq_r \sqrt{p}\,\norm{u}_2 + p^{1/r}\norm{u}_p.
    \]
    Thus, we have
    \[
    \rho_p(u)
\lesssim_r
\norm{X_u}_{L_p}.
    \]

    To finish the proof, it is enough to show that
\[
\sqrt p\,\|t\|_2 + p^{1/r}\|t\|_p
\lesssim_r \bigl(\sqrt p\,\|t\|_2 + p^{1/r}\|t\|_\infty\bigr).
\]
By scaling, we may assume that
\[
\sqrt p\,\|t\|_2 + p^{1/r}\|t\|_\infty = 1.
\]
In particular,
\[
\|t\|_2 \le p^{-1/2},
\qquad
\|t\|_\infty \le p^{-1/r}.
\]

Now let \(p\ge 2\). Since
\[
|t_k|^p = |t_k|^2 |t_k|^{p-2}
\le |t_k|^2 \|t\|_\infty^{\,p-2},
\]
we get
\[
\|t\|_p
=
\Bigl(\sum_k |t_k|^p\Bigr)^{1/p}
\le
\Bigl(\sum_k |t_k|^2 \|t\|_\infty^{\,p-2}\Bigr)^{1/p}
=
\bigl(\|t\|_2^2 \|t\|_\infty^{\,p-2}\bigr)^{1/p}.
\]
Using the bounds on \(\|t\|_2\) and \(\|t\|_\infty\), we obtain
\[
\|t\|_p
\le
\bigl(p^{-1} p^{-(p-2)/r}\bigr)^{1/p}
=
p^{-1/r} p^{(2-r)/(rp)}.
\]
Since \(p^{1/p}\le e\), it follows that
\[
p^{(2-r)/(rp)}
=
\bigl(p^{1/p}\bigr)^{(2-r)/r}
\le
e^{(2-r)/r},
\]
and hence
\[
\|t\|_p \le e^{(2-r)/r} p^{-1/r}.
\]
Therefore,
\[
p^{1/r}\|t\|_p \le e^{(2-r)/r}.
\]
Combining this with
\[
\sqrt p\,\|t\|_2 \le 1,
\]
we arrive at
\[
\sqrt p\,\|t\|_2 + p^{1/r}\|t\|_p
\le
1 + e^{(2-r)/r}.
\]
This proves the claim, with
\[
L(r)=1+e^{(2-r)/r}.
\]
\end{proof}

To use Theorem \ref{thm:HMO} with the ranks $p_k=\max\{2,\log(k+2)\}$, we also need a uniform way to absorb $1\le p\le 2$.

\begin{lemma}\label{lem:low-order-absorption}
There exists a constant $C_r<\infty$, depending only on $r$, such that for every $u\in\R^N$ and every $1\le p\le 2$,
\begin{equation}\label{eq:low-order-absorption}
\rho_2(u)\le C_r\,\norm{X_u}_{L_p}.
\end{equation}
Consequently, if a sequence $(y_k)_{k\ge 1}$ satisfies
\[
\norm{X_{y_k}}_{L_{\log(k+2)}}\le B
\qquad\text{for all }k\ge 1,
\]
then
\[
\rho_{p_k}(y_k)\le C_r B,
\qquad p_k=\max\{2,\log(k+2)\},
\qquad k\ge 1.
\]
\end{lemma}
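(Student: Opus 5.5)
The plan is to show that for $1\le p\le 2$ the $L_p$ norm of $X_u$ is comparable, up to constants depending only on $r$, to its $L_2$ norm. After that, Theorem~\ref{thm:HMO} at $p=2$ finishes the argument. Note that $\norm{X_u}_{L_2}=\norm{\xi}_{L_2}\norm{u}_2$, and $\norm{u}_\infty\le\norm{u}_2$. Hence
\[
\rho_2(u)=\sqrt2\,\norm{u}_2+2^{1/r}\norm{u}_\infty\le(\sqrt2+2^{1/r})\norm{u}_2 ,
\]
so it suffices to prove $\norm{u}_2\lesssim_r\norm{X_u}_{L_1}$, because $\norm{X_u}_{L_1}\le\norm{X_u}_{L_p}$ for $p\ge1$.

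For the lower bound on $\norm{X_u}_{L_1}$ I will use Khintchine's inequality with Rademacher signs. Since the $X_i$ are symmetric, $X_u$ has the same law as $\sum_i\eps_i u_i|X_i|$, where $(\eps_i)$ are independent signs independent of $(X_i)$. Conditioning on the $|X_i|$ and applying Khintchine with the optimal constant $1/\sqrt2$ gives
\[
\textsf E|X_u|\ge \tfrac{1}{\sqrt2}\,\textsf E\Bigl(\sum_i u_i^2X_i^2\Bigr)^{1/2}.
\]
The right-hand side is bounded below by $\tfrac{1}{\sqrt2}\norm{(u_i\textsf E|X_i|)_i}_2=\tfrac{1}{\sqrt2}\textsf E|\xi|\,\norm{u}_2$. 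This follows from the reverse Minkowski (Jensen) inequality, since the map $v\mapsto\norm{v}_2$ is convex: $\textsf E\norm{(u_i|X_i|)}_2\ge\norm{(\textsf E u_i|X_i|)}_2$. The quantity $\textsf E|\xi|=\Gamma(1+1/r)$ is a positive constant depending only on $r$.

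An alternative route, in case one prefers to avoid Khintchine, is Hölder interpolation. One writes $\norm{X_u}_2^2\le\norm{X_u}_1^{\theta}\norm{X_u}_4^{2-\theta}$ for a suitable $\theta$, and bounds $\norm{X_u}_{L_4}\lesssim_r\rho_4(u)\lesssim_r\norm{u}_2$ by Theorem~\ref{thm:HMO}. Either way we obtain \eqref{eq:low-order-absorption}. The first route is cleaner, so I will use it.

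For the consequence, fix $k$ and split into two cases. If $\log(k+2)\ge2$, then $p_k=\log(k+2)$, and Theorem~\ref{thm:HMO} gives $\rho_{p_k}(y_k)\le c_r^{-1}\norm{X_{y_k}}_{L_{\log(k+2)}}\le c_r^{-1}B$. Otherwise $1\le\log 3\le\log(k+2)<2$ and $p_k=2$, so \eqref{eq:low-order-absorption} gives $\rho_2(y_k)\le C_rB$. Taking the larger of the two constants finishes the proof. No step is a genuine obstacle. The only point that needs care is making sure every constant depends only on $r$, including the Khintchine constant and $\textsf E|\xi|$.
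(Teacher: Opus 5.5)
Your proof is correct, but your main route to the key inequality $\norm{u}_2\lesssim_r \textsf{E}|X_u|$ differs from the paper's. The paper's argument is elementary and self-contained:
\begin{itemize}
\item it computes $\textsf{E}X_u^2=m_2\norm{u}_2^2$ and bounds $\textsf{E}X_u^4\le C_r\norm{u}_2^4$ by expanding the fourth power;
\item it then applies H\"older in the form $\textsf{E}X_u^2\le(\textsf{E}|X_u|)^{2/3}(\textsf{E}X_u^4)^{1/3}$, which gives $\textsf{E}|X_u|\ge(\textsf{E}X_u^2)^{3/2}/(\textsf{E}X_u^4)^{1/2}\ge c_r\norm{u}_2$.
\end{itemize}
This is essentially your ``alternative route'', except that the fourth moment is computed directly rather than taken from Theorem~\ref{thm:HMO} at $p=4$.

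Your main route uses symmetrization, the $L_1$ Khintchine inequality conditionally on $(|X_i|)$, and Jensen for the convex map $v\mapsto\norm{v}_2$. It imports Khintchine as a black box, but in return:
\begin{itemize}
\item it needs only the first moment of $\xi$;
\item it gives the explicit constant $\Gamma(1+1/r)/\sqrt2$;
\item it works verbatim for any i.i.d.\ symmetric coordinates with $0<\textsf{E}|\xi|<\infty$.
\end{itemize}
Szarek's sharp constant $1/\sqrt2$ is not needed; any absolute Khintchine constant suffices.

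Your deduction of the consequence matches the paper's: the case split at $\log(k+2)=2$, using $\log 3\ge 1$ so that the low-order bound applies.

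One small point: your opening sentence says Theorem~\ref{thm:HMO} at $p=2$ finishes the argument. As you then show, the trivial bound $\rho_2(u)\le(\sqrt2+2^{1/r})\norm{u}_2$ already suffices, so that theorem is not used in the first part.
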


\begin{proof}
Let $m_q:=\textsf{E}|X_1|^q$. Since $\textsf{P}(|X_1|>t)=e^{-t^r}$, integration by parts gives
\[
m_q=q\int_0^\infty t^{q-1}e^{-t^r}\,dt=\Gamma\!\left(1+\frac{q}{r}\right),
\]
so in particular $m_2,m_4<\infty$.

 Since the $X_i$ are symmetric, they are centered, hence
\[
\textsf{E} X_u^2 = m_2\sum_{i=1}^N u_i^2 = m_2\norm{u}_2^2.
\]
Expanding the fourth power and using independence together with symmetry, only the pairings survive:
\[
\textsf{E} X_u^4
=
 m_4\sum_{i=1}^N u_i^4 + 6m_2^2\sum_{1\le i<j\le N} u_i^2u_j^2
\le C_r\Bigl(\sum_{i=1}^N u_i^2\Bigr)^2
= C_r\norm{u}_2^4.
\]
Now apply H\"older's inequality in the form
\[
\textsf{E} |X_u|^2 = \textsf{E}\bigl(|X_u|^{2/3}|X_u|^{4/3}\bigr)
\le (\textsf{E}|X_u|)^{2/3}(\textsf{E}|X_u|^4)^{1/3}.
\]
Rearranging yields
\[
\textsf{E}|X_u|
\ge
\frac{(\textsf{E} X_u^2)^{3/2}}{(\textsf{E} X_u^4)^{1/2}}
\ge c_r\norm{u}_2.
\]
Since $\norm{X_u}_{L_p}\ge \norm{X_u}_{L_1}=\textsf{E}|X_u|$ for every $p\ge 1$, we obtain
\[
\norm{X_u}_{L_p}\ge c_r\norm{u}_2
\qquad (1\le p\le 2).
\]
On the other hand,
\[
\rho_2(u)=\sqrt2\,\norm{u}_2 + 2^{1/r}\norm{u}_\infty
\le (\sqrt2+2^{1/r})\norm{u}_2,
\]
because $\norm{u}_\infty\le \norm{u}_2$. This proves \eqref{eq:low-order-absorption}.

For the consequence, if $\log(k+2)\ge 2$, then Theorem \ref{thm:HMO} gives
\[
\rho_{p_k}(y_k)\le C_r\norm{X_{y_k}}_{L_{p_k}}\le C_r B.
\]
If $\log(k+2)<2$, then $p_k=2$, and the conclusion follows from \eqref{eq:low-order-absorption}. The proof is complete.
\end{proof}
The following lemma provides the Equivalence of the convex hull and the absolutely convex hull.
\begin{lemma}
Let
\[
p_k:=\max\{2,\log(k+2)\}, \qquad k\ge1.
\]
Fix a finite set \(T\subset \mathbb R^N\). Assume first that there exists a sequence \((y_k)_{k\ge1}\subset \mathbb R^N\)
such that
\[
T-T\subset \operatorname{absconv}\{y_k:k\ge1\}
\]
and
\[
\|X_{y_k}\|_{L_{p_k}}\le B, \qquad k\ge1.
\]
Then there exists another sequence \((z_m)_{m\ge1}\subset \mathbb R^N\) such
that
\[
T-T\subset \operatorname{conv}\{z_m:m\ge1\}
\]
and
\[
\|X_{z_m}\|_{L_{p_m}}\le C_r B, \qquad m\ge1,
\]
where \(C_r<\infty\) depends only on \(r\). Conversely, if there exists a sequence \((z_m)_{m\ge1}\subset \mathbb R^N\)
such that
\[
T-T\subset \operatorname{conv}\{z_m:m\ge1\}
\]
and
\[
\|X_{z_m}\|_{L_{p_m}}\le B, \qquad m\ge1,
\]
then the same sequence also satisfies
\[
T-T\subset \operatorname{absconv}\{z_m:m\ge1\}
\]
with the same bound.
Hence the formulation with the convex hull and the formulation with the
absolutely convex hull are equivalent up to a constant depending only on \(r\).
\end{lemma}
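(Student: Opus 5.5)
The converse direction is immediate: $\operatorname{conv}(A)\subset\operatorname{absconv}(A)$, so $T-T\subset\operatorname{conv}\{z_m\}\subset\operatorname{absconv}\{z_m\}$, and the moment bounds are untouched. All the work is in the first direction, which I will handle by interleaving signs. Define
\[
z_{2k-1}:=y_k,\qquad z_{2k}:=-y_k,\qquad k\ge1 .
\]
Then $\operatorname{conv}\{z_m:m\ge1\}=\operatorname{conv}(\{y_k\}\cup\{-y_k\})=\operatorname{absconv}\{y_k:k\ge1\}\supset T-T$, using the identity $\operatorname{absconv}(A)=\operatorname{conv}(A\cup(-A))$ recorded in the introduction.

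It remains to bound the moments. By symmetry of the $X_i$, $\|X_{-y_k}\|_{L_p}=\|X_{y_k}\|_{L_p}$, so for $m\in\{2k-1,2k\}$ we need
\[
\|X_{y_k}\|_{L_{p_m}}\le C_rB .
\]
The difficulty is that $p_m\ge p_k$: the index moved from $k$ to roughly $2k$, so the moment order went up, and $p$-norms increase with $p$. The plan is to work with the gauge $\rho_p$ instead of the $L_p$-norms directly.

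First, by Lemma~\ref{lem:low-order-absorption} (its consequence), the hypothesis $\|X_{y_k}\|_{L_{p_k}}\le B$ gives $\rho_{p_k}(y_k)\le C_rB$. Second, I compare $p_m$ with $p_k$. Since $m\le 2k$, we have $m+2\le 2(k+2)$, so
\[
\log(m+2)\le \log 2+\log(k+2),\qquad\text{hence}\qquad p_m\le p_k+\log 2\le 2p_k,
\]
using $p_k\ge2$. Since $\rho_p(u)$ is increasing in $p$ and homogeneous in the two scalings $\sqrt p$ and $p^{1/r}$,
\[
\rho_{p_m}(u)\le \rho_{2p_k}(u)\le 2^{1/r}\rho_{p_k}(u).
\]
Third, Theorem~\ref{thm:HMO} applied at order $p_m\ge2$ gives
\[
\|X_{z_m}\|_{L_{p_m}}\le C_r\,\rho_{p_m}(y_k)\le C_r\,2^{1/r}C_rB .
\]

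There is one bookkeeping point. The hypothesis is stated with $L_{p_k}$ where $p_k=\max\{2,\log(k+2)\}$, whereas Lemma~\ref{lem:low-order-absorption} is phrased with $L_{\log(k+2)}$. Since $\log(k+2)\le p_k$ and $L_p$-norms increase in $p$, the hypothesis implies the $L_{\log(k+2)}$ bound, so the lemma applies as stated. Alternatively, Theorem~\ref{thm:HMO} can be applied directly at $p_k\ge2$.

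I expect no real obstacle. The only substantive step is absorbing the shift in rank from $k$ to $2k$, which the doubling property of $\rho_p$ handles.
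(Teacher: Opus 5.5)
Your proof is correct and follows essentially the same route as the paper: interleave $z_{2k-1}=y_k$, $z_{2k}=-y_k$, compare $p_m\le 2p_k$ to get $\rho_{p_m}(y_k)\le 2^{1/r}\rho_{p_k}(y_k)$, and apply Theorem~\ref{thm:HMO} at both ends. Two points are filled in by you rather than by the paper: you verify $p_m\le p_k+\log 2\le 2p_k$ via $m+2\le 2(k+2)$, where the paper only asserts it, and you note that the lower bound could equally come from Lemma~\ref{lem:low-order-absorption}.
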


\begin{proof}
The second implication is immediate, since
\[
\operatorname{conv}\{z_m:m\ge1\}\subset \operatorname{absconv}\{z_m:m\ge1\}.
\]

To prove the converse direction, define
\[
z_{2k-1}:=y_k,
\qquad
z_{2k}:=-y_k,
\qquad k\ge1.
\]
Then
\[
\operatorname{absconv}\{y_k:k\ge1\}
=
\operatorname{conv}\{z_m:m\ge1\},
\]
so
\[
T-T\subset \operatorname{conv}\{z_m:m\ge1\}.
\]

It remains to show that the sequence \((z_m)_{m\ge1}\) satisfies the required
moment bound.
For \(m=2k-1\) or \(m=2k\), we have
\[
p_m\le p_{2k}\le 2p_k.
\]
Therefore,
\[
\rho_{p_m}(z_m)=\rho_{p_m}(y_k)\le 2^{1/r}\rho_{p_k}(y_k),
\]
because
\[
\sqrt{p_m}\le \sqrt{2}\,\sqrt{p_k}\le 2^{1/r}\sqrt{p_k},
\qquad
p_m^{1/r}\le 2^{1/r}p_k^{1/r}.
\]
Now Theorem~\ref{thm:HMO} gives
\[
\|X_{z_m}\|_{L_{p_m}}
\le C_r \rho_{p_m}(z_m)
\le C_r 2^{1/r}\rho_{p_k}(y_k)
\le C_r' \|X_{y_k}\|_{L_{p_k}}
\le C_r' B.
\]
Here \(C_r'\) depends only on \(r\).
This proves the claim.
\end{proof}

\section{Expected suprema over random sign sets}

We begin with the estimate for the expected supremum. Let
\(\varepsilon^1,\ldots,\varepsilon^M\) be independent random vectors, each distributed uniformly
on \(\{-1,1\}^d\), and set
\[
T_\varepsilon:=\{0,\varepsilon^1,\ldots,\varepsilon^M\}.
\]
The goal of this section is to prove that, whenever \(M\le e^Q\),
\[
\mathbb E_\varepsilon b_X(T_\varepsilon)\lesssim_r \sqrt{dQ}.
\]
In particular, the required upper bound for \(b_X(T_\varepsilon)\) holds with positive
probability.

\begin{lemma}\label{lem:rademacher-max}
Let $\eps^1,\dots,\eps^M$ be independent random vectors, each uniformly distributed on $\{-1,1\}^d$. Then for every fixed $x\in\R^d$,
\begin{equation}\label{eq:rademacher-max}
\textsf{E}_\eps \max_{1\le j\le M} |\ip{\eps^j}{x}|
\le C\sqrt{\log(2M)}\,\norm{x}_2,
\end{equation}
where $C$ is an absolute constant.
\end{lemma}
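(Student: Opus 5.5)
The plan is the standard sub-Gaussian maximal inequality. For fixed $x\in\R^d$, each $Z_j:=\ip{\eps^j}{x}=\sum_{i=1}^d \eps^j_i x_i$ is a Rademacher sum. By Hoeffding's lemma, $\textsf{E}\, e^{\lambda \eps_i x_i}=\cosh(\lambda x_i)\le e^{\lambda^2x_i^2/2}$ for every $\lambda\in\R$. By independence of the coordinates, this gives
\[
\textsf{E}\, e^{\lambda Z_j}\le e^{\lambda^2\norm{x}_2^2/2}.
\]
If $x=0$ there is nothing to prove, so we normalize to $\norm{x}_2=1$ by homogeneity of both sides in $x$.

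To handle the absolute value, we consider the $2M$ variables $\pm Z_1,\dots,\pm Z_M$. Each of them satisfies the same bound on its moment generating function, and $\max_j|Z_j|$ is the maximum of these $2M$ variables. Independence across $j$ is not needed for this step.

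For $\lambda>0$ we use Jensen's inequality and bound the maximum of exponentials by their sum:
\[
\exp\Bigl(\lambda\,\textsf{E}\max_j|Z_j|\Bigr)\le \textsf{E}\max_{j,\pm} e^{\pm\lambda Z_j}\le \sum_{j,\pm}\textsf{E}\, e^{\pm\lambda Z_j}\le 2M e^{\lambda^2/2}.
\]
Taking logarithms gives
\[
\textsf{E}\max_j|Z_j|\le \frac{\log(2M)}{\lambda}+\frac{\lambda}{2}.
\]
The choice $\lambda=\sqrt{2\log(2M)}$ yields the bound $\sqrt{2\log(2M)}$. This is legitimate because $M\ge1$ implies $\log(2M)\ge\log2>0$. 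Undoing the normalization gives \eqref{eq:rademacher-max} with $C=\sqrt2$.

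There is no real obstacle. The only points needing care are the symmetrization over $\pm$, which accounts for the factor $2M$ inside the logarithm, and the observation that $\log(2M)$ stays bounded away from zero. The latter makes the optimization over $\lambda$ valid even when $M=1$.
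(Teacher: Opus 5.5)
Your proof is correct, but it follows a slightly different route from the paper. Both arguments start from the same estimate $\textsf{E}\, e^{\lambda \ip{\eps^j}{x}}\le e^{\lambda^2\norm{x}_2^2/2}$, and both bound the maximum over $j$ by a sum, so neither uses independence across $j$.

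The paper then turns the moment generating function bound into the Chernoff tail bound. It integrates that tail to get $\|\ip{\eps^j}{x}\|_{L_s}\le C\sqrt{s}\,\norm{x}_2$, and finally applies $\textsf{E}\max_j|S_j|\le(\sum_j\textsf{E}|S_j|^s)^{1/s}$ with $s=\log(2M)$ and $M^{1/s}\le e$. You instead stay with exponential moments, apply Jensen to $e^{\lambda\,\cdot}$ rather than Lyapunov to the $s$-th power, and optimize in $\lambda$.

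Your version is shorter and skips the tail-integration step, which the paper only sketches with an unspecified constant. It also gives the explicit constant $C=\sqrt2$.

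The paper's version has two by-products. It produces the tail estimate $\textsf{P}\{|\ip{\eps^j}{x}|\ge t\}\le 2e^{-t^2/(2\norm{x}_2^2)}$, which is reused in Lemma~\ref{lem:small-covered-measure}. It also controls the stronger quantity $\|\max_j|\ip{\eps^j}{x}|\|_{L_{\log(2M)}}$, not just the expectation. Neither extra is needed for Lemma~\ref{lem:rademacher-max} itself or for Proposition~\ref{prop:bX-random-sign-set}.
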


\begin{proof}
Fix $j$ and set $S_j:=\ip{\eps^j}{x}=\sum_{i=1}^d \eps_i^j x_i$. For every $\lambda\in\R$,
\[
\textsf{E}_\eps e^{\lambda S_j}
=
\prod_{i=1}^d \textsf{E} e^{\lambda \eps_i^j x_i}
=
\prod_{i=1}^d \cosh(\lambda x_i)
\le
\exp\!\left(\frac{\lambda^2\norm{x}_2^2}{2}\right),
\]
because $\cosh u\le e^{u^2/2}$ for all $u\in\R$. By Chernoff's bound,
\[
\textsf{P}_\eps\{|S_j|\ge t\}\le 2\exp\!\left(-\frac{t^2}{2\norm{x}_2^2}\right)
\qquad (t\ge 0).
\]
Integrating the tail, one obtains for every $s\ge 1$,
\[
\bigl(\textsf{E}_\eps |S_j|^s\bigr)^{1/s}\le C\sqrt{s}\,\norm{x}_2.
\]
Now choose $s:=\log(2M)\ge 1$. Then
\[
\textsf{E}_\eps \max_{1\le j\le M}|S_j|
\le
\Bigl(\textsf{E}_\eps \max_{1\le j\le M}|S_j|^s\Bigr)^{1/s}
\le
\Bigl(\sum_{j=1}^M \textsf{E}_\eps |S_j|^s\Bigr)^{1/s}
\le
M^{1/s} C\sqrt{s}\,\norm{x}_2.
\]
Since $M^{1/\log(2M)}\le e$, the estimate \eqref{eq:rademacher-max} follows.
\end{proof}

\begin{proposition}\label{prop:bX-random-sign-set}
Let
\[
T_\eps:=\{0,\eps^1,\dots,\eps^M\}\subset\R^d,
\]
where $\eps^1,\dots,\eps^M$ are independent uniform sign vectors. If $M\le e^Q$ and $Q\ge 1$, then
\begin{equation}\label{eq:bX-random-sign-set}
\textsf{E}_\eps\,\bx(T_\eps)\le C_r\sqrt{dQ}.
\end{equation}
Consequently,
\begin{equation}\label{eq:bX-random-sign-set-half-prob}
\textsf{P}_\eps\!\left\{\bx(T_\eps)\le 2C_r\sqrt{dQ}\right\}\ge \frac12.
\end{equation}
\end{proposition}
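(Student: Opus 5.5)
The strategy is to swap the order of expectations and apply Lemma~\ref{lem:rademacher-max} with $x$ replaced by the Weibull vector $X=(X_1,\dots,X_d)$. First, because $0\in T_\eps$, we have
\[
\bx(T_\eps)=\textsf{E}_X\max\Bigl(0,\max_{j\le M}\ip{\eps^j}{X}\Bigr)\le \textsf{E}_X\max_{1\le j\le M}|\ip{\eps^j}{X}|.
\]
We then integrate over $\eps$. Since $\eps$ and $X$ are independent and everything is nonnegative, Fubini (Tonelli) lets us exchange the two expectations. For each fixed realization of $X$, Lemma~\ref{lem:rademacher-max} with $x=X$ then gives
\[
\textsf{E}_\eps\bx(T_\eps)\le \textsf{E}_X\,\textsf{E}_\eps\max_{j\le M}|\ip{\eps^j}{X}|\le C\sqrt{\log(2M)}\;\textsf{E}_X\norm{X}_2 .
\]

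It remains to bound the two factors separately. For the second, Jensen's inequality gives $\textsf{E}\norm{X}_2\le(\textsf{E}\sum_i X_i^2)^{1/2}=\sqrt{m_2 d}$, where $m_2=\Gamma(1+2/r)$ is the second moment computed in the proof of Lemma~\ref{lem:low-order-absorption}. For the first, $M\le e^Q$ and $Q\ge1$ give $\log(2M)\le \log 2+Q\le 2Q$. Combining these yields \eqref{eq:bX-random-sign-set} with $C_r=C\sqrt{2\Gamma(1+2/r)}$.

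For \eqref{eq:bX-random-sign-set-half-prob}, we note that $\bx(T_\eps)\ge0$, again because $0\in T_\eps$. Markov's inequality then gives $\textsf{P}_\eps\{\bx(T_\eps)>2C_r\sqrt{dQ}\}\le 1/2$, which is the claim.

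No step here is a real obstacle. The only point requiring care is the exchange of expectations together with the use of Lemma~\ref{lem:rademacher-max} for a random $x$. This is legitimate because the lemma holds for every deterministic $x$ and the vector $X$ is independent of the $\eps^j$. Heavy tails cause no difficulty, since only the second moment of $X$ enters.
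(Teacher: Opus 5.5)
Your proposal is correct and follows the paper's proof essentially step for step: you bound $\sup_{t\in T_\eps}X_t$ by $\max_j|\ip{\eps^j}{X}|$, apply Lemma~\ref{lem:rademacher-max} conditionally on $X$, control $\textsf{E}\norm{X}_2$ by Jensen, and finish with Markov. You also make explicit two small points the paper leaves implicit, namely $\log(2M)\le 2Q$ and the nonnegativity of $\bx(T_\eps)$ needed for Markov's inequality.
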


\begin{proof}
Fix a realization $X=(X_1,\dots,X_d)$ of the Weibull vector. Then
\[
\sup_{t\in T_\eps}X_t
=
\max\Bigl\{0,\max_{1\le j\le M} \ip{\eps^j}{X}\Bigr\}
\le
\max_{1\le j\le M}|\ip{\eps^j}{X}|.
\]
Taking conditional expectation with respect to $\eps$ and applying Lemma \ref{lem:rademacher-max},
\[
\textsf{E}_\eps\Bigl[\sup_{t\in T_\eps} X_t\,\Big|\,X\Bigr]
\le C\sqrt{\log(2M)}\,\norm{X}_2
\le C\sqrt{Q}\,\norm{X}_2.
\]
Now take expectation with respect to $X$ and use Jensen's inequality:
\[
\textsf{E}\norm{X}_2
\le
\Bigl(\textsf{E}\sum_{i=1}^d X_i^2\Bigr)^{1/2}
=
\sqrt{d\,\textsf{E} X_1^2}
\le C_r\sqrt{d}.
\]
Combining the two inequalities yields \eqref{eq:bX-random-sign-set}. Finally, \eqref{eq:bX-random-sign-set-half-prob} is an immediate consequence of Markov's inequality.
\end{proof}

\section{Small sets associated with fixed admissible sequences}

We next turn to the obstruction to an absolutely convex covering. As explained above, if
\[
T_\varepsilon\subset \operatorname{absconv}\{y_k:k\ge1\},
\]
then every point \(\varepsilon^j\) must belong to the set \(U(y_\bullet)\) associated with
\((y_k)\). We begin with the simpler case in which the sequence \((y_k)\) is fixed.

The purpose of this section is to prove that, for every fixed sequence satisfying
\[
\rho_{p_k}(y_k)\le A\sqrt{dQ},\qquad k\ge1,
\]
the set \(U(y_\bullet)\) has exponentially small measure in the discrete cube. The case in which
\((y_k)\) may depend on \(\varepsilon^1,\ldots,\varepsilon^M\) is treated in Section \(5\).

\begin{definition}\label{def:W-admissible}
Let $W>0$. A sequence $(y_k)_{k\ge 1}\subset\R^d$ is called \emph{$W$-admissible} if
\[
\rho_{p_k}(y_k)\le W,
\qquad
p_k=\max\{2,\log(k+2)\},
\qquad k\ge 1.
\]
For such a sequence we define
\begin{equation}\label{eq:U-def}
U(y_\bullet)
:=
\Bigl\{\sigma\in\{-1,1\}^d:
\sup_{k\ge 1}|\ip{y_k}{\sigma}|\ge d/2
\Bigr\}.
\end{equation}
\end{definition}

\begin{lemma}\label{lem:small-covered-measure}
Fix $A>0$, let $Q\ge 1$ and $d\ge 1$, and put
\[
W:=A\sqrt{dQ}.
\]
Then there exists a constant $c_{A,r}>0$ such that, whenever $d/Q$ is sufficiently large, every $W$-admissible sequence satisfies
\begin{equation}\label{eq:small-covered-measure}
\mu\bigl(U(y_\bullet)\bigr)
\le
\exp\!\left(-c_{A,r}\,\frac{d}{Q}\right),
\end{equation}
where $\mu$ denotes the uniform probability measure on $\{-1,1\}^d$.
\end{lemma}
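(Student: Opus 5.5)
We bound $\mu(U(y_\bullet))$ by a union bound over $k$, using for each fixed $y_k$ a tail estimate for the Rademacher sum $\ip{y_k}{\sigma}$ that exploits both pieces of the gauge $\rho_{p_k}$. Fix $k$ and write $p=p_k$. Admissibility gives
\[
\norm{y_k}_2\le \frac{A\sqrt{dQ}}{\sqrt p},\qquad \norm{y_k}_\infty\le \frac{A\sqrt{dQ}}{p^{1/r}}.
\]
We use two estimates. The first is the trivial bound $|\ip{y_k}{\sigma}|\le \norm{y_k}_1\le \sqrt d\,\norm{y_k}_2\le A d\sqrt{Q/p}$. This already vanishes the event $\{|\ip{y_k}{\sigma}|\ge d/2\}$ once $p> 4A^2Q$, i.e.\ for $\log(k+2)>4A^2 Q$. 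So only the finitely many indices $k\le K:=\exp(4A^2Q)$ matter. The second is the Hoeffding bound from the proof of Lemma~\ref{lem:rademacher-max}:
\[
\mu\{|\ip{y_k}{\sigma}|\ge d/2\}\le 2\exp\Bigl(-\frac{d^2}{8\norm{y_k}_2^2}\Bigr)\le 2\exp\Bigl(-\frac{d\,p}{8A^2Q}\Bigr).
\]

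We then sum over the surviving indices:
\[
\mu(U(y_\bullet))\le \sum_{k\ge1} 2\exp\Bigl(-\frac{d\,p_k}{8A^2Q}\Bigr).
\]
Since $p_k\ge\log(k+2)$, each term is at most $2(k+2)^{-d/(8A^2Q)}$. Moreover $p_k\ge 2$, so when $d/(8A^2Q)\ge 2$ the series is dominated by
\[
C\exp\Bigl(-\frac{d}{8A^2Q}\Bigr)\sum_k (k+2)^{-d/(16A^2Q)},
\]
and the sum is $O(1)$ as soon as $d/(16A^2Q)\ge 2$. Hence $\mu(U)\le C\exp(-d/(8A^2Q))$. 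Finally, for $d/Q$ large the constant $C$ is absorbed, giving \eqref{eq:small-covered-measure} with $c_{A,r}=1/(16A^2)$.

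The main worry is whether this is really enough, or whether one must use the $\ell_\infty$ part with exponent $p^{1/r}$. The union bound over infinitely many $k$ works only because the decay exponent grows with $p_k$, which is exactly the $\sqrt{p}\,\norm{\cdot}_2$ control. Thus the $\ell_\infty$ term appears unnecessary for this lemma; I expect it enters only in the VC step of Section 5, where the complexity of the family of sets $U$ must be controlled. If the union bound were to fail, the fallback would be to split $y_k$ into its large coordinates, at most $(A\sqrt{dQ}/d)^{-2}\norm{y_k}_2^2$ of them and bounded via $\norm{y_k}_\infty$, and a flat part handled by Hoeffding. I anticipate this is not needed; the only delicate point is checking that the geometric sum is uniformly bounded, which the requirement that $d/Q$ be sufficiently large handles.
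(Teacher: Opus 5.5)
Your proof is correct and is essentially the paper's argument: Hoeffding's bound with $\norm{y_k}_2\le W/\sqrt{p_k}$ gives $2\exp(-c\,dp_k/(A^2Q))$, and the union bound converges because the exponent grows with $p_k$ while only about $e^{p}$ indices have $p_k\approx p$. The paper groups indices by integer levels of $p_k$ where you split the exponent in half, and both summations are routine.

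The one real difference is the truncation, which for this lemma is essentially cosmetic: it just makes the supremum a maximum over finitely many $k$. The paper cuts at $p_k>(4W)^r$ using the $\ell_\infty$ term. That is what Proposition~\ref{prop:no-sample-dependent-cover} later needs to keep the number of relevant indices at $e^{o(Q)}$. Your $\ell_2$ cutoff at $p_k>4A^2Q$ would leave $e^{4A^2Q}$ indices, which is too many for the VC step, exactly as you anticipated.
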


\begin{proof}
Let
\[
P:=(4W)^r.
\]
Suppose first that $p_k>P$. Since $\rho_{p_k}(y_k)\le W$, the second term in \eqref{eq:gauge-def} gives
\[
\norm{y_k}_\infty \le W p_k^{-1/r} < \frac14.
\]
Hence for every $\sigma\in\{-1,1\}^d$,
\[
|\ip{y_k}{\sigma}|
\le
\sum_{i=1}^d |(y_k)_i|
\le d\norm{y_k}_\infty
<\frac d4,
\]
so such indices do not contribute to $U(y_\bullet)$.

Now consider an index with $p_k\le P$. The first term in \eqref{eq:gauge-def} yields
\[
\norm{y_k}_2\le \frac{W}{\sqrt{p_k}}.
\]
Let $\sigma=(\sigma_1,\dots,\sigma_d)$ be a uniform random sign vector. Exactly as in the proof of Lemma \ref{lem:rademacher-max},
\[
\textsf{P}\bigl\{|\ip{y_k}{\sigma}|\ge t\bigr\}
\le 2\exp\!\left(-\frac{t^2}{2\norm{y_k}_2^2}\right).
\]
Taking $t=d/2$ and using the above bound on $\norm{y_k}_2$ gives
\begin{equation}\label{eq:single-cap-bound}
\mu\Bigl\{\sigma:|\ip{y_k}{\sigma}|\ge d/2\Bigr\}
\le 2\exp\!\left(-c\,\frac{d p_k}{A^2Q}\right).
\end{equation}

Group the indices according to the value of $p_k$. For each integer $m\ge 2$, the set of indices with $m\le p_k<m+1$ has cardinality at most $e^{m+1}$, because $p_k=\max\{2,\log(k+2)\}$. Therefore, using the union bound and \eqref{eq:single-cap-bound}, we have
\[
\mu\bigl(U(y_\bullet)\bigr)
\le
C\sum_{m=2}^{\lceil P\rceil} e^m \exp\!\left(-c\,\frac{dm}{A^2Q}\right)
\le
C\sum_{m=2}^{\infty}
\exp\!\left(-m\Bigl(c\,\frac{d}{A^2Q}-1\Bigr)\right).
\]
If $d/Q$ is large enough, then the coefficient in parentheses is bounded below by $c'_{A,r} d/Q$. The series is then dominated by its first term, which proves \eqref{eq:small-covered-measure}.
\end{proof}

\begin{remark}\label{rem:fixed-covering-essential}
In Lemma \ref{lem:small-covered-measure}, the probability is taken only with respect to the test point $\sigma$, while the vectors $y_k$ are fixed in advance. If the covering sequences were allowed to depend on the tested sign vector itself, then the estimate would be false: for instance, taking $y_1=\sigma$ makes $|\ip{y_1}{\sigma}|=d$.
The role of the VC argument in the next section is precisely to overcome the fact that, in the actual counterexample, the covering sequence may depend on the entire random sample.
\end{remark}

\section{Uniform exclusion of sample-dependent admissible sequences}

The estimate proved in Section \(4\) applies to one fixed admissible sequence. In the covering
problem, however, the sequence \((y_k)\) may depend on the random points
\(\varepsilon^1,\ldots,\varepsilon^M\). Thus Lemma \(4.2\) alone does not exclude the possibility
that, after the points are given, one can choose a sequence whose associated set
\(U(y_\bullet)\) contains all of them.

The purpose of this section is to rule out this possibility. We show, using VC theory, that with
probability tending to one there is no admissible sequence \((y_k)\) such that
\[
\varepsilon^j\in U(y_\bullet),\qquad 1\le j\le M.
\]
The argument uses only that the sets \(U(y_\bullet)\) belong to a VC class of controlled
dimension.

We first recall the standard notions of range spaces, traces, growth
functions, and VC dimension.
They will be used only in a very simple way, but it is convenient to fix the
notation.

\begin{definition}\label{def:range-space}
Let $\Omega$ be a set and let $\mathcal{C}\subset 2^\Omega$ be a family of subsets. The pair $(\Omega,\mathcal{C})$ is called a \emph{range space}. For a finite subset $S\subset\Omega$, the \emph{trace} of $\mathcal{C}$ on $S$ is
\[
\mathcal{C}|_S:=\{C\cap S:C\in\mathcal{C}\}\subset 2^S.
\]
The \emph{growth function} is
\[
\Pi_{\mathcal C}(n):=\sup_{|S|=n}|\mathcal{C}|_S|,
\qquad n\ge 1.
\]
\end{definition}

\begin{definition}
A finite set $S\subset\Omega$ is \emph{shattered} by $\mathcal{C}$ if $\mathcal{C}|_S=2^S$. The \emph{VC dimension} of $\mathcal{C}$ is
\[
\VCdim(\mathcal{C})
:=
\sup\{|S|: S\subset\Omega\text{ finite and shattered by }\mathcal{C}\}.
\]
\end{definition}

\begin{theorem}[Sauer--Shelah lemma]\label{thm:sauer-shelah}
If $\VCdim(\mathcal{C})=v<\infty$, then for every $n\ge 1$,
\begin{equation}\label{eq:sauer-shelah}
\Pi_{\mathcal{C}}(n)
\le
\sum_{j=0}^v \binom{n}{j}
\le
(Cn)^v.
\end{equation}
See Theorem 8.2.16 in \cite{vershynin2018high}.
\end{theorem}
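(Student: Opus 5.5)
The statement to prove is the Sauer--Shelah lemma, $\Pi_{\mathcal C}(n)\le\sum_{j\le v}\binom nj\le (Cn)^v$. I would prove the two inequalities separately.

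\emph{First inequality.} Fix a finite $S$ with $|S|=n$ and set $\mathcal F:=\mathcal C|_S\subset 2^S$. I will prove the stronger ``Pajor'' statement that $\mathcal F$ shatters at least $|\mathcal F|$ subsets of $S$, where the empty set counts as shattered. Every shattered subset of $S$ is shattered by $\mathcal C$, so it has cardinality at most $v$. There are at most $\sum_{j\le v}\binom nj$ such subsets, and the bound on $|\mathcal F|$ follows.

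The Pajor statement is proved by induction on $n$, or equivalently on $|\mathcal F|$. The case $|\mathcal F|\le1$ is trivial, since $\emptyset$ is shattered. Otherwise pick $x\in S$ on which two members of $\mathcal F$ disagree. Split $\mathcal F=\mathcal F_0\cup\mathcal F_1$ according to whether $x\notin F$ or $x\in F$; both parts are nonempty.
\begin{itemize}
\item By induction, $\mathcal F_0$ shatters at least $|\mathcal F_0|$ sets and $\mathcal F_1$ shatters at least $|\mathcal F_1|$ sets, none containing $x$.
\item A set shattered by only one of the two families is shattered by $\mathcal F$.
\item If $Y$ is shattered by both, then both $Y$ and $Y\cup\{x\}$ are shattered by $\mathcal F$.
\end{itemize}
Inclusion--exclusion then gives at least $|\mathcal F_0|+|\mathcal F_1|=|\mathcal F|$ shattered sets.

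\emph{Second inequality.} The case $v=0$ is immediate: the sum equals $1\le (Cn)^0$. For $1\le v\le n$, I would use the standard estimate
\[
\sum_{j\le v}\binom nj\le\Bigl(\frac{en}{v}\Bigr)^v .
\]
This follows by multiplying the sum by $(v/n)^v$ and bounding $(v/n)^v\le (v/n)^j$ for $j\le v$. That gives $\sum_j\binom nj (v/n)^j\le(1+v/n)^n\le e^v$. Since $v\ge1$, the result is at most $(en)^v$. For $v>n$ the sum is at most $2^n\le 2^v\le(2n)^v$. So $C=e$ works in all cases.

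The main obstacle is the combinatorial first inequality, specifically getting the induction bookkeeping right in the two cases above.

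Alternatively, since the paper only cites this result, one may simply invoke \cite{vershynin2018high} as stated.
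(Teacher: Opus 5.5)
Your proof is correct. It is the standard route, which the paper does not reproduce since it only cites Vershynin for this lemma: Pajor's lemma via splitting $\mathcal F$ at a point $x$ where two members differ, then counting subsets of size at most $v$, then the estimate $\sum_{j\le v}\binom nj\le (en/v)^v$, with the cases $v=0$ and $v>n$ handled separately. The only cosmetic point is that your induction, as written, is on $|\mathcal F|$, because $\mathcal F_0,\mathcal F_1$ still live on $S$; a genuine induction on $n$ would first require restricting them to $S\setminus\{x\}$.
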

We now turn to the class of sets that is relevant for the present argument.

For our purposes, the relevant sets are unions of halfspaces on the discrete
cube \(\Omega_d=\{-1,1\}^d\).
Indeed, each exposed set is built from conditions of the form
\(|\langle y_k,\sigma\rangle|\ge d/2\), and each such condition is the union
of two halfspace conditions.
This leads naturally to the class \(\mathcal{U}_{d,L}\) defined below.

Let
\[
\Omega_d:=\{-1,1\}^d.
\]
Denote by $\mathcal{H}_d$ the class of traces on $\Omega_d$ of halfspaces in $\R^d$:
\[
\mathcal{H}_d
:=
\Bigl\{
\{\sigma\in\Omega_d: \ip{a}{\sigma}\ge b\}: a\in\R^d,\ b\in\R
\Bigr\}.
\]
For an integer $L\ge 1$, define
\[
\mathcal{U}_{d,L}
:=
\Bigl\{
H_1\cup\cdots\cup H_{2L}: H_\ell\in \mathcal{H}_d
\Bigr\}.
\]
The factor $2L$ reflects the identity
\[
|\ip{y}{\sigma}|\ge d/2
\iff
\bigl(\ip{y}{\sigma}\ge d/2\bigr)\ \text{or}\ \bigl(\ip{-y}{\sigma}\ge d/2\bigr).
\]

\begin{lemma}\label{lem:VC-union-halfspaces}
There exists an absolute constant $C$ such that
\begin{equation}\label{eq:VC-union-halfspaces}
\VCdim(\mathcal{U}_{d,L})\le C dL\log(dL+2).
\end{equation}
\end{lemma}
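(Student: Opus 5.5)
The bound comes from counting traces: first control the growth function of $\mathcal{U}_{d,L}$, then convert that into a bound on the VC dimension.

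\emph{Halfspaces.} The class $\mathcal{H}_d$ consists of traces of halfspaces $\{x:\ip{a}{x}\ge b\}$. This is the class of positivity sets of the $(d+1)$-dimensional vector space of affine functions $x\mapsto \ip{a}{x}-b$ (taking closed rather than strict sets is harmless, since on a finite set one may perturb $b$). The standard Dudley/Radon argument therefore gives $\VCdim(\mathcal{H}_d)\le d+1$. Restricting to the subset $\Omega_d\subset\R^d$ can only decrease the VC dimension. Theorem~\ref{thm:sauer-shelah} then yields
\[
\Pi_{\mathcal H_d}(n)\le (Cn)^{d+1}.
\]

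\emph{Unions.} Let $S\subset\Omega_d$ with $|S|=n$. Any trace $(H_1\cup\dots\cup H_{2L})\cap S$ equals $\bigcup_\ell (H_\ell\cap S)$, so it is determined by the $2L$-tuple of traces $(H_\ell\cap S)_\ell$. Hence
\[
|\mathcal U_{d,L}|_S|\le \Pi_{\mathcal H_d}(n)^{2L}\le (Cn)^{2L(d+1)}.
\]

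\emph{From growth to VC dimension.} Suppose $S$ of size $n$ is shattered by $\mathcal U_{d,L}$. Then
\[
2^n\le (Cn)^{2L(d+1)},
\qquad\text{that is,}\qquad
n\le 2L(d+1)\log_2(Cn).
\]
Put $D:=2L(d+1)\simeq dL$. The inequality $n\le D\log_2(Cn)$ forces $n\le C'D\log(D+2)$. To see this, note that if $n> KD\log(D+2)$ with $K$ a large absolute constant, then $\log(Cn)\lesssim \log K+\log D+\log\log(D+2)$. Consequently
\[
D\log_2(Cn)\lesssim D(\log K+\log(D+2)) < KD\log(D+2) < n
\]
once $K$ is large, which is a contradiction. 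Since $D\le 4dL$, we get $\VCdim(\mathcal U_{d,L})\le C\,dL\log(dL+2)$, as claimed.

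The only step needing care is this last inversion of $n\le D\log n$ with absolute constants, including the small cases $d,L\ge1$ where logarithms could degenerate; this is why the paper writes $\log(dL+2)$ rather than $\log(dL)$. The halfspace VC bound itself is classical and can be cited (for example from \cite{vershynin2018high}).
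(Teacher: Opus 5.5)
Your proposal is correct and follows the paper's proof step for step: the VC bound $d+1$ for halfspaces, Sauer--Shelah, the count $|\mathcal U_{d,L}|_S|\le \Pi_{\mathcal H_d}(n)^{2L}$, and inversion of $2^n\le (Cn)^{O(dL)}$. One step needs tightening: your estimate $\log(Cn)\lesssim \log K+\log(D+2)$ holds only for $n$ near the threshold $KD\log(D+2)$, not for all larger $n$. Fix it either by passing to a shattered subset of size $\lfloor KD\log(D+2)\rfloor+1$ (shattering is hereditary), or by noting that $n\mapsto n-D\log_2(Cn)$ is increasing for $n\ge D/\ln 2$; the paper also leaves this step implicit.
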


\begin{proof}
It is classical that halfspaces in $\R^d$ have VC dimension $d+1$; see, for instance, Matou\v sek~\cite[Chapter~10]{Matousek2002}. Therefore, by Theorem \ref{thm:sauer-shelah},
\[
\Pi_{\mathcal{H}_d}(n)
\le
(Cn)^{d+1}
\le
(Cn)^{2d}
\qquad (n\ge 1),
\]
after adjusting the absolute constant.

Now fix an $n$-point subset $S\subset\Omega_d$. Any member of $\mathcal{U}_{d,L}$ is the union of at most $2L$ traces from $\mathcal{H}_d|_S$, so
\[
|\mathcal{U}_{d,L}|_S|
\le
\bigl(\Pi_{\mathcal{H}_d}(n)\bigr)^{2L}
\le
(Cn)^{4dL}.
\]
If $S$ is shattered by $\mathcal{U}_{d,L}$, then $2^n=|\mathcal{U}_{d,L}|_S|$, hence
\[
2^n\le (Cn)^{4dL}.
\]
Set $D:=dL\ge 1$. If $n>C'D\log(D+2)$ with $C'$ sufficiently large, then
\[
n\log 2 > 4D\log(Cn),
\]
which contradicts the previous inequality. Therefore every shattered set has cardinality at most $C'D\log(D+2)$, proving \eqref{eq:VC-union-halfspaces}.
\end{proof}

\begin{remark}
A more general bound of order $O(vK\log K)$ for $K$-fold unions of a VC class of dimension $v$ is standard; see Eisenstat and Angluin~\cite{EisenstatAngluin2007}. We included the short proof above because it is exactly tailored to the present setting.
\end{remark}

We next prove a simple VC estimate at level \(1/2\).
\begin{proposition}\label{prop:epsilon-net-half}
Let $\mathcal{C}$ be a countable family of measurable subsets of a probability space $(\Omega,\mu)$, and assume $\VCdim(\mathcal{C})=v<\infty$. Let $Z_1,\dots,Z_M$ be independent $\Omega$-valued random variables with common distribution $\mu$. Then
\begin{equation}\label{eq:epsilon-net-half}
\textsf{P}\Bigl\{\exists C\in\mathcal{C}: \mu(C)\ge \tfrac12,\ \{Z_1,\dots,Z_M\}\cap C=\varnothing\Bigr\}
\le
C\,\Pi_{\mathcal C}(2M)e^{-cM}
\le C(CM)^v e^{-cM},
\end{equation}
where $c,C>0$ are absolute constants.
Consequently, if $\mathcal{D}$ is another VC class of dimension $v$ on the same space, then
\begin{equation}\label{eq:small-set-containing-sample}
\textsf{P}\Bigl\{\exists D\in\mathcal{D}: \mu(D)\le \tfrac12,\ Z_1,\dots,Z_M\in D\Bigr\}
\le C(CM)^v e^{-cM}.
\end{equation}
\end{proposition}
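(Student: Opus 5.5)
The plan is the classical symmetrization (ghost sample) argument behind the $\varepsilon$-net theorem, at the fixed level $\tfrac12$. Let $E$ denote the event in \eqref{eq:epsilon-net-half}. Countability of $\mathcal C$ guarantees that $E$ is measurable.

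We introduce an independent ghost sample $Z_{M+1},\dots,Z_{2M}$ with law $\mu$. Define $F$ as the event that some $C\in\mathcal C$ misses $Z_1,\dots,Z_M$ and contains at least $M/4$ of the ghost points. On $E$, fix the first set $C$ in some enumeration of $\mathcal C$ with $\mu(C)\ge\tfrac12$ that misses the first sample; this set is measurable with respect to the first sample. Conditionally on the first sample, the number of ghost points in $C$ is Binomial$(M,\mu(C))$ with mean at least $M/2$. By Chebyshev or Hoeffding, it is at least $M/4$ with probability at least $\tfrac12$ once $M\ge M_0$ (absolute). Hence $\textsf P(E)\le 2\textsf P(F)$ for $M\ge M_0$. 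For $M<M_0$ the bound holds trivially by enlarging $C$.

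To bound $\textsf P(F)$, condition on the multiset $S=\{Z_1,\dots,Z_{2M}\}$ and use exchangeability: the sample is a uniformly random permutation of $2M$ points, with the first $M$ forming the original sample. Only the trace $C\cap S$ matters, and there are at most $\Pi_{\mathcal C}(2M)$ traces. Fix one trace containing $k\ge M/4$ of the $2M$ positions. The probability that a uniform random split puts all $k$ of these positions in the ghost half is
\[
\binom{2M-k}{M}\Big/\binom{2M}{M}\le 2^{-k}\le 2^{-M/4}.
\]
A union bound over the traces then gives
\[
\textsf P(F)\le \Pi_{\mathcal C}(2M)\,2^{-M/4}.
\]
Combining this with $\textsf P(E)\le 2\textsf P(F)$ and with Theorem~\ref{thm:sauer-shelah}, which gives $\Pi_{\mathcal C}(2M)\le (2CM)^v$, proves \eqref{eq:epsilon-net-half} after adjusting the constants.

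For \eqref{eq:small-set-containing-sample}, pass to complements: set $\mathcal C:=\{\Omega\setminus D: D\in\mathcal D\}$. The condition $\mu(D)\le\tfrac12$ becomes $\mu(\Omega\setminus D)\ge\tfrac12$, and $Z_j\in D$ for all $j$ becomes $\Omega\setminus D$ missing the sample. Complementation preserves traces bijectively, so $\VCdim(\mathcal C)=v$ and $\Pi_{\mathcal C}=\Pi_{\mathcal D}$. The statement then follows from the first part.

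The main technical care lies in the first symmetrization step: the chosen set $C$ must be a measurable function of the first sample, which is where countability of $\mathcal C$ is used. Atoms of $\mu$, and the possibility that several $Z_j$ coincide, are handled by working with indexed positions rather than point sets, so the permutation count is unaffected.
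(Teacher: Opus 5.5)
Your proposal is correct and follows the paper's proof essentially step by step. It uses the same ghost-sample event (your $F$ is the paper's $E'$), the same measurable choice of the first admissible set in an enumeration of $\mathcal C$, the same random-split count $\binom{2M-k}{M}/\binom{2M}{M}\le 2^{-k}$ with a union bound over at most $\Pi_{\mathcal C}(2M)$ traces, and the same passage to complements.

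The only cosmetic difference is the constant-probability lower bound for the Binomial count. You get it via Chebyshev or Hoeffding for $M\ge M_0$ and absorb small $M$ into the constant, whereas the paper uses Paley--Zygmund, which works directly for all $M\ge 2$.
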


\begin{proof}
We first prove \eqref{eq:epsilon-net-half}. Put
\[
E:=
\left\{
\exists C\in\mathcal C:\ \mu(C)\ge \frac12,\ 
\{Z_1,\ldots,Z_M\}\cap C=\varnothing
\right\}.
\]
Let \(Z'_1,\ldots,Z'_M\) be an independent copy of
\(Z_1,\ldots,Z_M\). Define
\[
E':=
\left\{
\exists C\in\mathcal C:\ 
\{Z_1,\ldots,Z_M\}\cap C=\varnothing,\ 
\#\{j:Z'_j\in C\}\ge \frac M4
\right\}.
\]

We first compare \(E\) and \(E'\). We may assume \(M\ge 2\), since the case
\(M=1\) is absorbed by changing the absolute constants.

Since \(\mathcal C\) is countable, write
\[
\mathcal C=\{C_1,C_2,\ldots\}.
\]
For \(z=(z_1,\ldots,z_M)\in\Omega^M\), let
\[
E_k:=\left\{
z\in\Omega^M:\ \mu(C_k)\ge \frac12,\ z_i\notin C_k,\ 1\le i\le M
\right\}.
\]
Then \(E=\bigcup_{k\ge1}E_k\). For \(z\in E\), define
\[
k(z):=\min\{k:\ z\in E_k\},
\qquad
C_z:=C_{k(z)}.
\]
Thus \(C_z\in\mathcal C\), \(\mu(C_z)\ge 1/2\), and
\(z_i\notin C_z\) for all \(1\le i\le M\).

Fix \(z\in E\), and put \(p_z:=\mu(C_z)\). For
\(z'=(z'_1,\ldots,z'_M)\in\Omega^M\), define
\[
N_z(z'):=\#\{j:\ z'_j\in C_z\}.
\]
With respect to the product measure \(\mu^M\) in the variable \(z'\),
\(N_z\) has binomial distribution with parameters \(M\) and
\(p_z\ge 1/2\). Hence
\[
\textsf E_{\mu^M}N_z=Mp_z\ge \frac M2
\]
and
\[
\textsf E_{\mu^M}N_z^2
=
Mp_z(1-p_z)+(Mp_z)^2
\le Mp_z+(Mp_z)^2
\le 2(Mp_z)^2,
\]
where the last inequality uses \(M\ge2\) and \(p_z\ge1/2\).
By the Paley--Zygmund inequality,
\[
\mu^M\left\{
z'\in\Omega^M:\ N_z(z')\ge \frac12 Mp_z
\right\}
\ge c_0
\]
for some absolute constant \(c_0>0\). Since \(p_z\ge1/2\), this implies
\[
\mu^M\left\{
z'\in\Omega^M:\ N_z(z')\ge \frac M4
\right\}
\ge c_0.
\]

Now consider the subset \(B\subset\Omega^M\times\Omega^M\) given by
\[
B:=
\left\{
(z,z'):\ z\in E,\ 
\#\{j:\ z'_j\in C_z\}\ge \frac M4
\right\}.
\]
For every \((z,z')\in B\), the set \(C_z\) satisfies the defining conditions
of \( E'\).
Therefore
\[
B\subset E'.
\]
Consequently, by Fubini's theorem,
\[
\textsf P(E')
\ge
(\mu^M\otimes\mu^M)(B)
=
\int_E
\mu^M\left\{
z'\in\Omega^M:\ 
\#\{j:\ z'_j\in C_z\}\ge \frac M4
\right\}
\,d\mu^M(z).
\]
Using the lower bound above for each \(z\in E\), we get
\[
\textsf P(E')
\ge
\int_E c_0\,d\mu^M(z)
=
c_0\mu^M(E)
=
c_0\textsf P(E).
\]
Thus
\[
\textsf P(E)\le c_0^{-1}\textsf P(E').
\]

It remains to estimate \(\textsf P(E')\). Let
\(Y_1,\ldots,Y_{2M}\) be independent random variables with distribution
\(\mu\). Independently of them, choose a set
\(I\subset\{1,\ldots,2M\}\) uniformly among all subsets of cardinality \(M\).
Then the two random families
\[
(Y_i)_{i\in I}
\qquad\text{and}\qquad
(Y_i)_{i\notin I}
\]
have the same joint distribution, up to the order of the points, as
\[
(Z_1,\ldots,Z_M)
\qquad\text{and}\qquad
(Z'_1,\ldots,Z'_M).
\]
Thus we may estimate \(E'\) in this equivalent model.

Fix the values of \(Y_1,\ldots,Y_{2M}\). The only remaining randomness is the
choice of \(I\). For \(C\in\mathcal C\), define
\[
A_C:=\{i\le 2M:\ Y_i\in C\}.
\]
The number of distinct sets \(A_C\) is at most \(\Pi_{\mathcal C}(2M)\).
If \(E'\) occurs, then for some \(C\in\mathcal C\),
\[
I\cap A_C=\varnothing
\]
and
\[
\#(A_C\cap I^c)\ge \frac M4.
\]
Since \(I\cap A_C=\varnothing\), the second condition implies
\[
|A_C|\ge \frac M4.
\]
Hence, conditionally on \(Y_1,\ldots,Y_{2M}\), the event \(E'\) is contained
in the union over all traces \(A\) with \(|A|\ge M/4\) of the events
\[
I\cap A=\varnothing.
\]

Fix \(y=(y_1,\ldots,y_{2M})\in\Omega^{2M}\). For \(C\in\mathcal C\), set
\[
A_C(y):=\{i\le 2M:y_i\in C\}.
\]
The number of distinct traces \(A_C(y)\) is at most \(\Pi_{\mathcal C}(2M)\).

Let
\[
\mathcal I_M:=\{I\subset\{1,\ldots,2M\}: |I|=M\}.
\]
For this fixed \(y\), denote by \(\mathcal B(y)\subset\mathcal I_M\) the set
of choices of \(I\) for which the event \(E'\) occurs. If \(I\in\mathcal B(y)\),
then there exists \(C\in\mathcal C\) such that
\[
I\cap A_C(y)=\varnothing
\qquad\text{and}\qquad
|A_C(y)|\ge \frac M4.
\]
Hence
\[
\mathcal B(y)
\subset
\bigcup_{\substack{A\in\{A_C(y):C\in\mathcal C\}\\ |A|\ge M/4}}
\{I\in\mathcal I_M:I\cap A=\varnothing\}.
\]
For a fixed \(A\subset\{1,\ldots,2M\}\) with \(|A|=s\ge M/4\), the number of
sets \(I\in\mathcal I_M\) avoiding \(A\) is
\[
\binom{2M-s}{M},
\]
with the convention that this binomial coefficient is zero if \(2M-s<M\).
Therefore
\[
\frac{\#\{I\in\mathcal I_M:I\cap A=\varnothing\}}{\#\mathcal I_M}
=
\frac{\binom{2M-s}{M}}{\binom{2M}{M}}
\le 2^{-s}
\le 2^{-M/4}
\le e^{-cM}.
\]
Consequently,
\[
\frac{\#\mathcal B(y)}{\#\mathcal I_M}
\le
\Pi_{\mathcal C}(2M)e^{-cM}.
\]
Averaging this bound over \(y=(Y_1,\ldots,Y_{2M})\) gives
\[
\textsf P(E')
\le
\Pi_{\mathcal C}(2M)e^{-cM}.
\]
Combining this estimate with the comparison between \(E\) and \(E'\), we get
\[
\textsf P(E)
\le
C\Pi_{\mathcal C}(2M)e^{-cM}.
\]
Finally, by the Sauer--Shelah lemma,
\[
\Pi_{\mathcal C}(2M)\le (CM)^v.
\]
Thus
\[
\textsf P(E)
\le
C(CM)^v e^{-cM},
\]
which proves \eqref{eq:epsilon-net-half}.

To prove \eqref{eq:small-set-containing-sample}, apply
\eqref{eq:epsilon-net-half} to the complement class
\[
\mathcal C:=\{\Omega\setminus D:\ D\in\mathcal D\}.
\]
Taking complements preserves the growth function and hence the VC dimension.
Moreover, the event
\[
\exists D\in\mathcal D:\ \mu(D)\le \frac12,\ Z_1,\ldots,Z_M\in D
\]
is the same as
\[
\exists C\in\mathcal C:\ \mu(C)\ge \frac12,\ 
\{Z_1,\ldots,Z_M\}\cap C=\varnothing.
\]
Therefore \eqref{eq:small-set-containing-sample} follows from
\eqref{eq:epsilon-net-half}. The proof is complete.
\end{proof}

\begin{remark}
This is a standard fixed-$\varepsilon$ consequence of the Haussler--Welzl theory of epsilon-nets~\cite{HausslerWelzl1987}. The self-contained proof above is included because it isolates exactly the form needed in the present argument.
\end{remark}

We can now return to the covering problem.
The previous proposition gives a uniform bound over a VC class, and
Lemma \ref{lem:VC-union-halfspaces} shows that the exposed sets coming from admissible sequences belong
to such a class.
Combining these two facts yields the required exclusion statement.

\begin{proposition}\label{prop:no-sample-dependent-cover}
Fix $A>0$ and $1<a<2/r-1$. Let
\[
d=\lfloor Q^a\rfloor,
\qquad
M=\lfloor e^Q\rfloor,
\qquad
W=A\sqrt{dQ}.
\]
Let $\eps^1,\dots,\eps^M$ be independent uniform random vectors in $\{-1,1\}^d$. Then, as $Q\to\infty$,
\begin{equation}\label{eq:no-sample-dependent-cover}
\textsf{P}\Bigl\{\exists\text{ a $W$-admissible sequence }(y_k)_{k\ge 1}\text{ with }\eps^j\in U(y_\bullet)\ \forall 1\le j\le M\Bigr\}\longrightarrow 0.
\end{equation}
\end{proposition}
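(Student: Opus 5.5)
The plan is to combine Lemma~\ref{lem:small-covered-measure} with Proposition~\ref{prop:epsilon-net-half}, applied to the VC class $\mathcal U_{d,L}$ of Lemma~\ref{lem:VC-union-halfspaces}. First, reduce an arbitrary $W$-admissible sequence to finitely many relevant vectors. As in the proof of Lemma~\ref{lem:small-covered-measure}, set $P=(4W)^r$. Every index with $p_k>P$ satisfies $|\ip{y_k}{\sigma}|<d/4$ for all $\sigma$, so
\[
U(y_\bullet)=\bigcup_{k\le L}\bigl\{\sigma:|\ip{y_k}{\sigma}|\ge d/2\bigr\},
\qquad L:=\lceil e^{P}\rceil .
\]
Hence $U(y_\bullet)\in\mathcal U_{d,L}$. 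Since $W=A\sqrt{dQ}\simeq A Q^{(a+1)/2}$, we get $P\simeq_{A,r} Q^{r(a+1)/2}$, and $r(a+1)/2<1$ precisely because $a<2/r-1$. So $\log L\lesssim Q^{\beta}$ with $\beta:=r(a+1)/2<1$. Lemma~\ref{lem:VC-union-halfspaces} then gives
\[
v:=\VCdim(\mathcal U_{d,L})\lesssim dL\log(dL+2)\le \exp\bigl(C_{A,r}Q^{\beta}\bigr)
\]
for large $Q$.

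Second, the measure bound. Since $a>1$, $d/Q\simeq Q^{a-1}\to\infty$, so Lemma~\ref{lem:small-covered-measure} applies for large $Q$ and gives $\mu(U(y_\bullet))\le \exp(-c_{A,r}Q^{a-1})\le 1/2$ for every $W$-admissible sequence. Consequently, the event in \eqref{eq:no-sample-dependent-cover} is contained in the event that some $D$ in the subclass
\[
\mathcal D:=\{U(y_\bullet): y_\bullet\ W\text{-admissible}\}\subset \mathcal U_{d,L},
\]
with $\mu(D)\le 1/2$, contains $\eps^1,\dots,\eps^M$. The space $\Omega_d$ is finite, so $\mathcal D$ is a finite, hence countable, family. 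Its VC dimension is at most $v$, since subclasses do not increase VC dimension. Thus \eqref{eq:small-set-containing-sample} bounds the probability by
\[
C(CM)^{v}e^{-cM}=\exp\bigl(\log C+v\log(CM)-cM\bigr).
\]

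Third, compare exponents. Here $M=\lfloor e^Q\rfloor$, so $cM\gtrsim e^{Q}$, while
\[
v\log(CM)\le \exp\bigl(C_{A,r}Q^\beta\bigr)\,(Q+C)=e^{o(Q)}
\]
because $\beta<1$. Hence the exponent tends to $-\infty$ and the probability tends to $0$.

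The main obstacle is this exponent bookkeeping: the number $L$ of relevant vectors is super-polynomial in $Q$, namely $e^{Q^\beta}$, and the argument works only because $\beta<1$ keeps the VC dimension $e^{o(Q)}$, negligible against $M\approx e^Q$. This is exactly where the hypothesis $a<2/r-1$ enters; the hypothesis $a>1$ is used only to make $d/Q\to\infty$ in Lemma~\ref{lem:small-covered-measure}. I would check carefully that the constants $4^r A^r$ in $P$ do not spoil this, and they do not, since they are absorbed into $C_{A,r}$.
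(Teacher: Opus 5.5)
Your proposal is correct and follows the paper's proof essentially step by step. Both arguments truncate to the at most $\lceil e^{(4W)^r}\rceil$ indices with $p_k\le (4W)^r$, which places $U(y_\bullet)$ in $\mathcal U_{d,L}$. Both then invoke Lemma~\ref{lem:small-covered-measure} to get $\mu(U(y_\bullet))\le 1/2$, apply \eqref{eq:small-set-containing-sample}, and compare $v\log(CM)=e^{O(Q^{\beta})}$ with $cM\approx e^{Q}$, using $\beta=r(a+1)/2<1$.

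Your only deviation is passing to the subclass $\mathcal D\subset\mathcal U_{d,L}$ and noting that it is finite because $\Omega_d$ is finite. This makes explicit the countability hypothesis of Proposition~\ref{prop:epsilon-net-half}, which the paper leaves implicit.
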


\begin{proof}
Let
\[
R:=(4W)^r,
\qquad
L:=\lceil e^R\rceil.
\]
By Lemma \ref{lem:small-covered-measure}, every fixed $W$-admissible sequence satisfies
\[
\mu\bigl(U(y_\bullet)\bigr)
\le
\exp\!\left(-c_{A,r}\,\frac{d}{Q}\right)
<\frac12
\]
for all sufficiently large $Q$, because $d/Q=Q^{a-1}(1+o(1))\to\infty$.

Moreover, the proof of Lemma \ref{lem:small-covered-measure} shows that indices with $p_k>R$ do not contribute to $U(y_\bullet)$. Thus $U(y_\bullet)$ is determined entirely by those indices with $p_k\le R$, and the number of such indices is at most $e^R\le L$. Since each condition $|\ip{y_k}{\sigma}|\ge d/2$ is the union of two halfspace events, it follows that
\[
U(y_\bullet)\in \mathcal U_{d,L}.
\]
Set
\[
v:=\VCdim(\mathcal U_{d,L}).
\]
By Lemma \ref{lem:VC-union-halfspaces},
\begin{equation}\label{eq:v-bound}
v\le C dL\log(dL+2).
\end{equation}

If there exists a \(W\)-admissible sequence \((y_k)_{k\ge1}\), possibly depending
on \(\eps^1,\dots,\eps^M\), such that
\[
\eps^j\in U(y_\bullet), \qquad 1\le j\le M,
\]
then there exists a set \(U\in \mathcal{U}_{d,L}\) such that
\[
\mu(U)\le \frac12,
\qquad
\eps^1,\dots,\eps^M\in U.
\]
Hence, by Proposition \ref{prop:epsilon-net-half} in the form \eqref{eq:small-set-containing-sample}, the probability of this event is at most
\begin{equation}\label{eq:uniform-bound-raw}
C(CM)^v e^{-cM}.
\end{equation}

It remains to show that the right-hand side tends to zero. Since
\[
R=(4A\sqrt{dQ})^r \le C_A (dQ)^{r/2}\le C_A Q^{\beta},
\qquad
\beta:=\frac{r(a+1)}{2},
\]
and $a<2/r-1$, we have $\beta<1$. Therefore
\[
R=O_A(Q^{\beta})=o(Q),
\qquad
L=e^{o(Q)}.
\]
Because $d=Q^a(1+o(1))$ is polynomial in $Q$, \eqref{eq:v-bound} yields
\[
v\log(CM)
\le
C dL\log(dL+2)\cdot Q
=
 e^{o(Q)}.
\]
On the other hand,
\[
M=e^{Q(1+o(1))}.
\]
Consequently,
\[
\log\bigl(C(CM)^v e^{-cM}\bigr)
\le
 e^{o(Q)} - c e^{Q(1+o(1))}
\longrightarrow -\infty.
\]
Thus \eqref{eq:uniform-bound-raw} tends to zero, which proves \eqref{eq:no-sample-dependent-cover}.
\end{proof}

\section{Construction of the counterexample and proof of the main theorem}

We now combine Proposition \ref{prop:bX-random-sign-set} and
Proposition \ref{prop:no-sample-dependent-cover}.

\begin{proof}[Proof of Theorem \ref{thm:quantitative-counterexample}]
Fix $A>0$ and $1<a<2/r-1$. Let
\[
d=\lfloor Q^a\rfloor,
\qquad
M=\lfloor e^Q\rfloor,
\qquad
T_\eps=\{0,\eps^1,\dots,\eps^M\}\subset \{0\}\cup\{-1,1\}^d.
\]
By Proposition \ref{prop:bX-random-sign-set}, the event
\[
E_1:=\Bigl\{\bx(T_\eps)\le C_r\sqrt{dQ}\Bigr\}
\]
has probability bounded below by a positive absolute constant (for instance, by $1/2$ after enlarging $C_r$ if necessary). By Proposition \ref{prop:no-sample-dependent-cover}, the event
\[
E_2:=\Bigl\{\text{there is no $A\sqrt{dQ}$-admissible sequence $(y_k)$ with }\eps^j\in U(y_\bullet)\ \forall j\Bigr\}
\]
has probability tending to $1$ as $Q\to\infty$. Hence, for all sufficiently large $Q$, the intersection $E_1\cap E_2$ has positive probability.
Choose deterministic vectors \(\varepsilon^1,\ldots,\varepsilon^M\) in this intersection, and define
\[
T:=\{0,\eps^1,\dots,\eps^M\}.
\]
Then \eqref{eq:bX-upper-main} holds.

Assume, toward a contradiction, that there exists a sequence $(y_k)_{k\ge 1}\subset\R^d$ satisfying \eqref{eq:T-covered} and \eqref{eq:gauge-controlled}. Since each $\eps^j\in T\subset \absconv\{y_k\}$, identity \eqref{eq:support-absconv} with $\theta=\eps^j$ gives
\[
d
=
\ip{\eps^j}{\eps^j}
\le
\sup_{k\ge 1}|\ip{\eps^j}{y_k}|.
\]
In particular,
\[
\eps^j\in U(y_\bullet),
\qquad 1\le j\le M.
\]
But \eqref{eq:gauge-controlled} says exactly that $(y_k)$ is $A\sqrt{dQ}$-admissible, contradicting the definition of $E_2$. Therefore no such sequence exists, and Theorem \ref{thm:quantitative-counterexample} is proved.
\end{proof}

\begin{proof}[Proof of Theorem \ref{thm:main-impossibility}]
Assume for contradiction that a universal constant $C_r$ exists such that \eqref{eq:universal-false-statement} holds for every finite set $T\subset\R^N$.
Fix any
\[
1<a<\frac{2}{r}-1.
\]
By Theorem \ref{thm:quantitative-counterexample}, for every sufficiently large $Q$ there exists a set $T\subset\R^d$ with
\[
\bx(T)\le C_r'\sqrt{dQ},
\]
but admitting no sequence $(y_k)$ that both covers $T$ and satisfies
\[
\rho_{p_k}(y_k)\le A\sqrt{dQ},
\qquad k\ge 1,
\]
for the fixed constant $A$ chosen below.

On the other hand, the assumed universal principle yields a free rank-wise covering sequence $(y_k)$ for this $T$ such that
\[
\norm{X_{y_k}}_{L_{\log(k+2)}}\le C_r\,\bx(T),
\qquad k\ge 1.
\]
By Lemma \ref{lem:low-order-absorption},
\[
\rho_{p_k}(y_k)
\le
C_r''\norm{X_{y_k}}_{L_{\log(k+2)}}
\le
C_r'' C_r\,\bx(T)
\le
C_r'' C_r C_r'\sqrt{dQ}.
\]
Thus the covering sequence satisfies the gauge bound with
\[
A:=C_r'' C_r C_r'.
\]
Since the same covering sequence also covers $T-T$, and therefore covers $T$, this contradicts Theorem \ref{thm:quantitative-counterexample}. The contradiction proves Theorem \ref{thm:main-impossibility}.
\end{proof}

\textbf{Acknowledgment}. The work of Hanchao Wang (corresponding author) was supported by the National Key R\&D Program of China (No.2024YFA1013501), the National Natural Science Foundation of China (No. 12571162), and Shandong Provincial Natural Science Foundation (No. ZR2024MA082).

\printbibliography
\end{document}